\documentclass[11pt]{article}

\usepackage{authblk}

\usepackage{hyperref}
\usepackage{amsfonts}
\usepackage{graphicx}
\usepackage{tabularx}
\usepackage{array}
\usepackage[usenames,dvipsnames]{color}
\usepackage{comment}
\usepackage{amsmath}
\usepackage{amsthm}
\usepackage{amssymb}
\usepackage{fullpage}
\usepackage[dvipsnames]{xcolor}
\usepackage{tikz}
\usepackage{listings}
\usepackage{dsfont}
\usepackage{enumerate}
\usepackage{romannum}

\newtheorem{theorem}{Theorem}[section]

\newtheorem{conjecture}[theorem]{Conjecture}

\newtheorem{question}[theorem]{Question}

\newtheorem{lemma}[theorem]{Lemma}
\newtheorem{corollary}[theorem]{Corollary}
\theoremstyle{definition}

\def\epsilon{\varepsilon}
\DeclareMathOperator{\dist}{dist}

\title{Cops and Robbers on Graphs without Long or Short Holes}
\author[1]{Alexander Clow\,\thanks{Supported by the Natural Sciences and Engineering Research Council of Canada (NSERC) through PGS D-601066-2025}}
\date{}

\begin{document}
\pagenumbering{arabic}

\maketitle

\begin{abstract}
    In 2019, Sivaraman conjectured that every $P_k$-free graph has cop number at most $k-3$ for $k\geq 5$. Chudnovsky, Norin, Seymour, and Turcotte proved $P_5$-free graphs have cop number at most $2$. For $k\geq 6$ Sivaraman's conjecture remains widely open. There is little evidence to believe Sivaraman's conjecture is optimal, making the discovery of substantially better bounds interesting, even for subclasses of $P_k$-free graphs. We consider this problem for graphs with no long holes, which generalizes $P_k$-free graphs, that also have some forbidden shorter holes. Specifically, we show that for $t\geq 13$, $(C_4,\dots, C_{t}, C_k,C_{k+1},\dots)$-free graphs have cop number at most $\lceil \frac{k-2}{\lfloor \frac{t}{4}\rfloor }\rceil+1$. Taking $C_{k-1}$, which has cop number $2$ whenever $k\geq 5$, demonstrates that this bound is tight up to a small constant factor for infinity many pairs $k$ and $t$. The proof relies on showing that in graphs without short holes, the closed neighbourhood of a short induced path can be guarded by $1$ cop, even when these paths are not geodesics. Additionally, for $t\geq 2$ we show that $(C_{t+2},C_{t+3},C_k, C_{k+1},\dots, )$-free graphs have cop number at most $(t-1)\lceil \frac{(k-2)}{t}\rceil$. This show that $(C_4,C_5,C_k,C_{k+1},\dots)$-free graphs have cop number at most $\lceil \frac{k-2}{2}\rceil$, extending work by Clow and Meger on $(P_k,\text{claw}, \text{butterfly}, C_4,C_5)$-free graphs to a substantially more general class.
\end{abstract}

\section{Introduction}

Cops and Robbers is a two-player game played on a connected graph, see \cite{AIGNER1984, nowakowski1983vertex,quilliot1983problemes}. 
To begin the game, the cop player places $k$ cops onto vertices of the graph,
then the robber player chooses a vertex to place the robber. 
Players take turns moving. 
During the cop player’s turn, each cop either moves to an adjacent vertex or passes and remains at their current vertex. 
Similarly, on the robber player’s turn, the robber either moves to an adjacent vertex or passes and remains at their current vertex.
The cop player wins if after finitely many moves a cop can move onto the vertex occupied by the robber, called capturing.
The robber player wins if the robber can provide a strategy to evade capture indefinitely. 
The least number of cops required for the cop player to win, regardless of the robber’s strategy, is called  the cop number of a graph, denoted $c(G)$ for a graph $G$.
If $c(G)\leq k$, then we say $G$ is $k$-cop win.
We suppose all graphs are connected, unless stated otherwise.
For more background on Cops and Robbers we recommend \cite{bonato2011game}.

We define the graphs $P_t$, $C_t$, and $K_t$ as the path with $t$ vertices, the cycle with $t$ vertices, and the complete graph on $t$ vertices, respectively. 
We call the complete bipartite graph $K_{1,3}$ the \text{claw}.
The $E$ graph is given by subdividing two edges of the $\text{claw}$.
If $G$ and $H$ are graphs, then we denote the disjoint union of $G$ and $H$ by $G+H$.
We let $\alpha(G)$ denote the independence number of $G$.
For more background and definitions in graph theory we refer the reader to \cite{west2001introduction}.

It is standard to consider classes of graphs defined by forbidden substructures such as minors or induced subgraphs.
A graph $G$ is $H$-free or $H$-minor free if $G$ does not
contain, respectively, any induced subgraph or minor which is isomorphic to $H$. 
Of particular importance for this paper will be the class of $P_k$-free graphs.
If $\mathcal{H}$ is a (finite or infinite list) list of graphs, we say $G$ is $\mathcal{H}$-free if $G$ has no graph in $\mathcal{H}$ as an induced subgraph.
At various points in the paper we will take $\{C_{k}, C_{k+1},\dots \}$ to mean the set of all cycles with length at least $k$.
Notice that if $G$ is $P_{k-1}$-free, then $G$ is also $(C_k,C_{k+1},\dots )$-free.
For $k\geq 4$,
a hole of length $k$ in $G$ is an induced copy of $C_k$ in $G$.

Cops and Robbers has been studied extensively in relation to forbidden minors and forbidden induced subgraphs.
Andreae \cite{andreae1986pursuit} showed that for all graphs $H$, there exists a constant $m = m(H)$
such that if $G$ has no $H$-minor, then $c(G) \leq m$.
The constant $m = m(H)$ here was recently improved by Kenter, Meger, and Turcotte \cite{kenter2025improved},
particularly for small or sparse graphs $H$.
When forbidding an induced subgraph $H$, or a subgraph $H$, one cannot guarantee the cop number of $H$-free graphs is bounded.
In fact, it was shown by Joret, Kami{\'n}ski, and Theis \cite{joret2010cops} 
that $H$-free graphs have bounded cop number if and only if $H$ is a linear forest (i.e a disjoint union of paths).

In \cite{joret2010cops} Joret, Kami{\'n}ski, and Theis demonstrated that 
every connected $P_k$-free graph is $(k-2)$-cop win.
This led Sivaraman \cite{sivaraman2019application} to give a different and simpler proof that every connected $P_k$-free graph is $(k-2)$-cop win.
This simpler proof involves the well known Gy\'arf\'as path argument,
and has the added benefit of bounding the number of turns it takes the cops to catch the robber.
Sivaraman generalized this in \cite{sivaraman2019cop}
to prove that  $(C_k,C_{k+1},\dots )$-free graphs have cop number at most $k-3$.
This improved an earlier theorem by Kosowski, Li, Nisse, and Suchan \cite{kosowski2015k} who showed $k-1$ cops 
are sufficient.

When $k$ is large there is no evidence that $k-2$ is the 
optimal upper bound for the cop number of $P_k$-free graphs.
This lead Sivaraman to the following conjecture.
Notice that $k\geq 5$ is required here because $C_4$
is a $P_4$-free graph with cop number $2$.

\begin{conjecture}[Sivaraman \cite{sivaraman2019application}]\label{Conj: main}
    For all $k\geq 5$, if $G$ is $P_k$-free, then $c(G) \leq k - 3$.
\end{conjecture}

This conjecture has seen a significant amount of attention,
most of which has been focused on the $P_5$-free case.
Initially many authors, see \cite{gupta2023cops,liu2019cop,sivaraman2019copK2,turcotte2022cops}, solved various subcases of the $P_5$-free case.
The general $P_5$-free conjecture was proven
by
Chudnovsky, Norin, Seymour, and Turcotte in \cite{chudnovsky2024cops}.
For several subclasses  $P_k$-free classes,
such as $(P_k,\text{claw})$-free graphs and $(P_k,C_t)$-free graphs where $t\leq k$,
Liu \cite{liu2019cop} showed
$k-3$ cops are sufficient.
Some of these results were significantly improved
by Clow and Meger in \cite{clow2027cops}.
For example,
\cite{clow2027cops}
showed that $(P_k,E)$-free graphs have cop number at most $\Big\lceil \frac{k-1}{2}\Big\rceil+3$.
During the preparation of this manuscript 
the additive constant $3$ was removed from this bound by
Gupta, Pradhan, and Swain \cite{PkNoClawFork2026} in the special case of $(P_k,\text{claw})$-free graphs.
Notice that the class of $(P_k,\text{claw})$-free graphs is a strict subset of the class of 
$(P_k,E)$-free graphs.

Much less attention has been payed to 
demonstrating the existence of $P_k$-free graphs with large cop number.
This is part of a general trend in the literature
where there are not many known techniques to lower bound cop number.
The techniques that are known mostly apply to graphs with large girth,
see for instance \cite{AIGNER1984,bradshaw2023cop,frankl1987cops},
or in some cases graphs forbidding small subgraphs such as $C_4$, see \cite{bonato2013cops}.
Neither of these restrictions is very amenable
to constructing interesting $P_k$-free graphs.

The best and, to the author's knowledge, only example 
of a $P_k$-free graph in the literature with large cop number comes from Clow and Zaguia \cite{clow2026cops}.
They demonstrate, for all $k$, the existence of graphs with cop number and clique cover number $k$.
Any graph with clique cover number $k$
is trivially $P_{2k+1}$-free.
These graphs are constructed randomly,
and this work was motivated by a problem 
proposed by Turcotte in \cite{turcotte2022cops}
which asked if for all $k$ there exists graphs $G$ with $k=c(G) = \alpha(G)$.
Note that the cop number of random graphs have been studied in other situations, 
see for instance \cite{luczak2010chasing,pralat2010does,pralat2016meyniel}.
Because of the random nature of the construction in \cite{clow2026cops}
these results do not provide lower bounds for $(P_k,H)$-free graphs.

We contribute to this literature by studying the
cop number of graphs with no long holes that have a further 
restriction on which short holes they admit.
Recalling that graphs with no long hole generalize $P_k$-free graphs
this second restriction is required,
otherwise this work would have direct implications to proving Conjecture~\ref{Conj: main}.
We also note that this direction of study is motivated for reasons outside of Conjecture~\ref{Conj: main}.
For instance, graphs with no holes (i.e. chordal graphs) are well known to be cop win.
Meanwhile, graphs with no even holes and no $\text{claw}$
have been shown to have cop number at most $2$ 
\cite{javadi2021game}.
Moreover, Clow and Zaguia \cite{clow2026cops}
proved that any graphs with cop number and clique cover number $k$
have a hole of length $t$ for all $4\leq t\leq k+1$.
The latter of which has implications to the cop number of perfect graphs.

We start by providing 
a bound on the cop number of 
graphs with no long hole and no short hole.
This result is primarily interesting because, using Liu's work in \cite{liu2019cop},
one can only show that forbidding $t$ holes as induced subgraphs will allow $k-3-t$ cops to win.
Whereas we show that by forbidding the shortest $t$ holes $O(\frac{k}{t})$ cops suffice.

Beyond simply demonstrating this bound,
the proof may be of independent interest.
Perhaps even more interest than the bound itself.
Like many Cops and Robbers proof, the argument proceeds by showing that the cops can 
monotonically expand their territory by guarding subgraphs.
However, unlike the standard version of this argument,
where cops guard shortest paths,
we use cops to guard the closed neighbourhood of
arbitrary short induced paths.
To the author's knowledge this is the first argument of its kind.

\begin{theorem}\label{Thm: No short hole}
    Let $t\geq 13$ be an integer.
    Every $(C_4,\dots, C_{t}, C_k, C_{k+1,\dots})$-free
    graph $G$ has cop number
    \[
    c(G)  \leq   { \Big\lceil \frac{k-2}{\lfloor\frac{t}{4} \rfloor} \Big\rceil}+1.
    \]
\end{theorem}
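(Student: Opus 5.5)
We follow Sivaraman's Gy\'arf\'as-path approach for $(C_k,C_{k+1},\dots)$-free graphs, but a cop now guards a whole block of the path instead of a single vertex. Set $s=\lfloor t/4\rfloor$ and $m=\lceil (k-2)/s\rceil$.

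In Sivaraman's argument the cops maintain an induced path $Q=v_1v_2\cdots v_\ell$ with two properties. The robber is confined to a component $R$ of $G-N[Q]$. Only the last vertex $v_\ell$ has neighbours in $R$. At each stage the cops extend $Q$ by a vertex $u\in N(v_\ell)$ adjacent to $R$, chosen so that the new robber component still sees only $u$. If $Q$ ever reached $k-1$ vertices, then $Q$ together with a path back through $R$ would create a hole of length at least $k$, which is forbidden. So $Q$ has at most $k-2$ vertices.

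We split $Q$ into consecutive subpaths $Q_1,\dots,Q_m$, each an induced path on at most $s$ vertices, and assign one cop to guard $N[Q_i]$. The extra $+1$ cop is a free cop that walks ahead and takes over the newest block while it is being established.

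The key lemma says: if $G$ has no holes of length $4,\dots,t$ and $P$ is an induced path on at most $s\le t/4$ vertices, then after finitely many setup moves one cop can guard $N[P]$. Guarding means the robber is caught whenever he enters $N[P]$. To prove it, we first use the short-hole condition. Take any two vertices $x,y\in N[P]$ and any shortest path between them. Combining this path with the subpath of $P$ between their attachments gives a closed walk of length at most about $4s\le t$. Since short holes are forbidden, such a walk cannot contain a hole of length $4,\dots,t$, so it must be chordal-like and triangulated. From this we aim to show that the subgraph $H$ induced on $N[P]$, or its ball of radius $2$, behaves like a graph of bounded tree structure in which distances are preserved. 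In particular we want a retraction-type map $f\colon V(G)\to N[P]$ that is $1$-Lipschitz on the region the robber can use. The cop then plays the shadow strategy: he moves to the image $f(r)$ of the robber's position and follows it. This is the standard isometric-path argument, with the map built from the forced chords that replace geodesicity.

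The main obstacle is this shadow/retraction step, because $P$ is not a geodesic and $N[P]$ need not be isometric. The factor $4$ in $s=\lfloor t/4\rfloor$ should come from this step: a closed walk using two vertices of $N[P]$, a detour of length at most $2$ outside, and a piece of $P$ has length at most $s+4$ plus two neighbour edges, which stays at most $t$. First I would try defining $f(r)$ as the vertex of $P$ closest to $r$, adjusted to a neighbour. Then I would check two things using the absence of holes of length at most $t$: that $f$ moves by at most $1$ per robber step, and that the robber cannot step into $N[P]$ without the cop being adjacent.

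Given the lemma, the induction proceeds as follows. When $Q_i$ is full, its guarding cop becomes stationary as a guard and the free cop starts block $i+1$. All the invariants of Sivaraman's argument are preserved because $N[Q]$ is covered by the guarded sets $N[Q_i]$. Since $Q$ has at most $k-2$ vertices, at most $m$ blocks are ever needed. The robber's component shrinks strictly at each extension, so he is eventually caught, which gives $c(G)\le m+1$.
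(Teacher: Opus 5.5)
\textbf{The bound $|Q|\le k-2$ is false, so your count of blocks collapses.} In a $(C_4,\dots,C_t,C_k,C_{k+1},\dots)$-free graph the Gy\'arf\'as path $Q$ is \emph{not} limited to $k-2$ vertices. A long path $P_n$ has no holes at all, yet $Q$ grows to nearly $n$ vertices there. Your justification fails because, by your own invariant, the robber's component attaches to $Q$ only at its newest end. There is no second attachment with which ``$Q$ together with a path back through $R$'' could close a cycle.

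What long-hole-freeness actually gives (Sivaraman's idea, used in the paper) is weaker. Write $q_1$ for the newest vertex. The robber can never enter a vertex $z$ whose first neighbour on $Q$ is $q_p$ with $p\ge k-2$. Otherwise a $(q_1,z)$-path through the robber's side, together with $zq_pq_{p-1}\cdots q_1$, would contain a hole of length at least $k$. So only a window of about $k-3$ newest vertices needs guarding, and this window moves as $Q$ grows without bound. Hence ``its guarding cop becomes stationary'' and ``at most $m$ blocks are ever needed'' cannot both hold.

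You need a mechanism that moves the guarded window while keeping the robber out. One option is retiring a block's cop once the whole block has left the window and reusing it as the builder. The paper's Phase~2 does something else: it prepends one vertex $q_1'$ at a time, parks the spare cop there, and moves each cop in turn from state (1)/(2) of Lemma~\ref{Lemma: 1-guard} for $N[Q_i]$ to one for the shifted block $N[Q_i']$. It also checks that the robber cannot slip into $N[Q']$ during the transition. Nothing of this kind is in your plan.

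\textbf{Your key lemma is only a plan, and the plan as stated does not work.} The map sending the robber to the nearest vertex of $P$ (or of $N(P)$) is not $1$-Lipschitz. If $P$ lies on a single hole of length between $t+1$ and $k-1$, a robber crossing the middle of the arc outside $P$ sees his nearest vertex jump from the $p_1$ end to the $p_\ell$ end in one step. The short-hole condition only constrains attachments that are \emph{close} to the robber.

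The paper's Lemma~\ref{Lemma: 1-guard} exploits exactly this. Non-adjacent $x,y\in N(P)$ satisfy $\dist_{G-N[P]}(x,y)\ge t-\ell$ (Lemma~\ref{Lemma: Common neighbourhood}). Hence the vertices of $N(P)$ within distance $<\frac{t-\ell}{2}$ of the robber form a clique inside some $N(p_j)$ (Lemma~\ref{Lemma: Helly neighbourhood}, via the Helly property of the intervals $N(w)\cap V(P)$). The cop then uses a two-state strategy:
\begin{itemize}
\item While the robber is within $\lfloor \frac t2-\ell-1\rfloor$ of $N(P)$, the cop shadows that clique.
\item Otherwise the cop waits on $P$ and walks along $P$ to the right $p_j$ when the robber approaches.
\end{itemize}
The hypothesis $\ell\le t/4$ is what makes this walk of at most $\ell-1$ steps shorter than the robber's remaining distance. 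That is the source of the $4$, not your closed-walk count, which would permit $s$ of order $t$.

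\textbf{Your setup step does not control where the robber ends up.} ``After finitely many setup moves one cop can guard $N[P]$'' says nothing about the robber's position when setup ends. He may by then be in a part of his component attached only to middle vertices of the new block. That breaks the Gy\'arf\'as invariant, or forces short blocks and ruins the count $\lceil (k-2)/s\rceil$. The paper handles this with the $\prec$-maximal chasing rule of Claims~1 and~2 in Phase~1. The rule builds the block adaptively along the robber's trail and proves it is an induced path of full order $\lfloor t/4\rfloor$. It shows the robber's component is still attached at the block's last vertex. It then gets the cop into state (1) or (2) without the robber entering the new neighbourhood.
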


Observe that this bound is tight within a small factor for infinitely some pairs $k$ and $t$.
For example, take any for any $k\geq 15$ with $k\equiv 2 \mod{4}$. 
The cycle
$C_{k-1}$ is $(C_4,\dots, C_{k-2},C_k,C_{k+1},\dots)$-free
and $C_{k-1}$ has cop number $2$, whereas Theorem~\ref{Thm: No short hole}
implies an upper bound of $5$.
It would be interesting to understand for what other 
values of $t$ as a function of $k$
the bound in Theorem~\ref{Thm: No short hole} is within a constant factor of being tight.

Our next theorem 
is aimed at showing that even if a smaller list of short holes are forbidden,
one can still obtain a multiplicative improvement over Conjecture~\ref{Conj: main}.
Also, recall that Clow and Meger \cite{clow2027cops} proved that 
$(P_k,\text{claw}, \text{butterfly}, C_4,C_5)$-free graphs have cop number
at most $\lceil \frac{k-1}{3}\rceil +3$.
Hence, this theorem shows 
that one can still multiplicatively improve Conjecture~\ref{Conj: main}
if the \text{claw} and \text{butterfly} are removed from this list.
Of course the removal of these graphs leads to a worst bound.

\begin{theorem}\label{Thm: general n56}
    Let $t\geq 2$ be an integer.
    Every $(C_{t+2}, C_{t+3}, C_k, C_{k+1,\dots})$-free graph $G$ has
    cop number
    \[
    c(G) \leq  (t-1)\Big\lceil \frac{k-2}{t} \Big\rceil.
    \]
\end{theorem}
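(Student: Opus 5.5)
We follow the Gyárfás-path strategy of Sivaraman \cite{sivaraman2019cop}, in the form where the cops grow a territory monotonically. At every stage the cops maintain an induced path $Q=v_1v_2\cdots v_m$ with $m\leq k-2$, and a connected component $R$ of $G-N[Q]$ that contains the robber. The robber must be unable to leave $R$ without being caught, so $N[Q]$ must be guarded. In addition, $v_m$ has a neighbour in $R$, and no vertex $v_i$ with $i<m$ has a neighbour in $R$ (except possibly $v_{m-1}$, adjusted as in the standard argument). If the robber's component is nonempty, we extend $Q$ by a neighbour $v_{m+1}\in N(v_m)\cap R$ and pass to the component of $R-N[v_{m+1}]$ containing the robber. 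The path $Q$ stays induced. If it reached $k-1$ vertices, then because $R$ is connected and attaches back only through the end, closing it up would yield a hole of length at least $k$, which is forbidden. So the process terminates with the robber captured once the length is bounded by $k-2$. We must, however, check carefully that termination forces capture; this is the usual argument from \cite{sivaraman2019cop}, which gives $k-3$ in the single-cop-per-vertex version.

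The new ingredient is to guard $N[Q]$ with fewer than one cop per vertex. We partition $Q$ into consecutive segments $S_1,S_2,\dots$, each an induced subpath on $t$ vertices, so there are at most $\lceil (k-2)/t\rceil$ of them. The key lemma is that, in a $(C_{t+2},C_{t+3})$-free graph, the closed neighbourhood of an induced path on $t$ vertices, restricted to edges toward the robber's component, can be guarded by $t-1$ cops. Multiplying gives the bound $(t-1)\lceil (k-2)/t\rceil$. To prove the lemma, place $t-1$ cops so that they shadow the robber's "projection" onto the segment.

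Concretely, write the segment as $u_1\cdots u_t$. Suppose a robber vertex $r$ in $R$ had neighbours at $u_i$ and $u_j$ with $j-i\geq t-1$ (that is, at both ends $u_1$ and $u_t$). Since $Q$ is induced, $r u_1\cdots u_t r$ is a cycle of length $t+1$, but it need not be induced: $r$ may have further neighbours on the segment. So we argue instead about robber vertices in $R$ adjacent to the segment, and about their connecting paths in $R$. For two such vertices $x,y$ joined through $R$, a shortest such configuration gives a hole of length $t+2$ or $t+3$ when their attachments on the segment are far apart. Forbidding these hole lengths should show that the attachment points of $R$ onto each segment lie within a window of $t-1$ consecutive vertices of the segment, or else the attachments are confined so that one cop per vertex of $t-1$ of the positions suffices. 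Then $t-1$ cops sit on those vertices; whenever the robber enters a neighbour of $u_i$, the cop on $u_i$ captures. As the path grows, the segment nearest the end is the only one whose window changes, so the cops on earlier segments never need to move. New cops are deployed on the new segment while the robber is still trapped in $R$, exactly as in the monotone argument.

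The main obstacle is making the guarding lemma precise. The tempting statement "a robber component sees at most $t-1$ vertices of each $t$-segment" is not obviously true, because only two hole lengths are forbidden. Longer detours through $R$ produce longer cycles, and chords complicate everything. I would first try to prove the lemma with a minimal-counterexample argument: take a shortest path in $R\cup\{u_a,u_b\}$ between attachments at the two extreme vertices of the segment, and use chords from that path into the segment to shrink it. The aim is to show that some induced cycle of length exactly $t+2$ or $t+3$ must arise. If this fails, I would fall back to choosing the segments adaptively, so that each segment consists of $t$ vertices whose neighbourhoods can be dominated by $t-1$ cops because two of them have comparable neighbourhoods toward $R$.
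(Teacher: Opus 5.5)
Your proposal has two genuine gaps. The first concerns the guarding lemma, which carries the whole content of the theorem. The version you aim for is static: the attachments of $R$ to a segment lie in a window of $t-1$ vertices, and the cops sit still. That version is false. Take $t=2$ and a segment $u_1u_2$ where $u_1$ has a private neighbour $a$ and $u_2$ a private neighbour $b$ (private with respect to $Q$). Join $a$ and $b$ by an induced path $a x_1\cdots x_m b$ with $x_1,\dots,x_m\in R$. The hole $u_1ax_1\cdots x_mbu_2$ has length $m+4\ge 6$, which is permitted as long as $m+4<k$. Here $R$ reaches neighbours of both ends, and one stationary cop cannot dominate both $a$ and $b$. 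No two path vertices have comparable neighbourhoods here, so choosing the segments adaptively does not help either. (Also note that vertices of $R$ have no neighbours on $Q$ at all; the relevant objects are the vertices of $N(Q)$ adjacent to $R$.)

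The missing idea, which is the paper's Lemma~\ref{Lemma: Guarding with conscutive forbidden holes}, is a \emph{moving} guard. Keep the $t-1$ cops on $t-1$ consecutive vertices of $P=v_1\cdots v_t$. Then every vertex of $N[P]$ is dominated except the private neighbours of the single uncovered endpoint. On each cop turn, slide all cops one step, to $v_1,\dots,v_{t-1}$ or to $v_2,\dots,v_t$, so that the uncovered endpoint is one whose private neighbours the robber's current vertex is not adjacent to. Such an endpoint always exists. If $x\notin N[P]$ were adjacent to a private neighbour $u$ of $v_1$ and a private neighbour $w$ of $v_t$, then $v_1\cdots v_t w u$ would be a $C_{t+2}$ if $uw\in E(G)$, and $v_1\cdots v_t w x u$ would be a $C_{t+3}$ otherwise. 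Likewise, a robber standing on a private neighbour of $v_t$ cannot step to a private neighbour of $v_1$. This is exactly where the two consecutive forbidden hole lengths enter.

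The second gap is your termination argument. Excluding long holes does not bound the length of induced paths (a long path is chordal). Since $R$ hangs off only the growing end of $Q$, extending $Q$ never closes a cycle. So $Q$ need not stop at $k-2$ vertices, and if cops on earlier segments never move, the number of cops is unbounded. What the hole condition actually gives, as in Sivaraman's argument and the paper's Phase~2, is the following. Let $q_1$ be the end of $Q$ next to the robber. Then the robber cannot enter $N(q_p)\setminus N[\{q_1,\dots,q_{k-3}\}]$ for any $p\ge k-2$, since that would create a hole of length at least $k$. So only the $k-3$ path vertices nearest the robber need guarding. As $Q$ keeps growing, cops must be released from the far end and reused. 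With segment guards this needs an explicit redeployment that shifts the segments along $Q$ while never leaving the relevant neighbourhoods unguarded. Your outline does not supply this, and its claim that earlier guards never need to move is incorrect.
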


\begin{corollary}
    If $G$ is $(P_k,C_4,C_5)$-free
    graph, then $c(G) \leq \lceil \frac{k-1}{2}\rceil$.
\end{corollary}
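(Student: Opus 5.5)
The plan is to obtain the corollary as the special case $t=2$ of Theorem~\ref{Thm: general n56}, with no new game-theoretic argument. That theorem already handles graphs with forbidden holes, so all that remains is to check that a $(P_k,C_4,C_5)$-free graph satisfies its hypotheses for a suitable cutoff length, and then to evaluate the bound.

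\textbf{Step 1: translate the hypothesis.} Let $G$ be a connected $(P_k,C_4,C_5)$-free graph. As noted in the introduction, a $P_{k}$-free graph is $(C_{k+1},C_{k+2},\dots)$-free. Indeed, deleting one vertex from an induced cycle of length $m\geq k+1$ leaves an induced path on $m-1\geq k$ vertices, and hence an induced $P_k$. Together with the absence of induced $C_4$ and $C_5$, this shows that $G$ is $(C_4,C_5,C_{k+1},C_{k+2},\dots)$-free.

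\textbf{Step 2: apply the theorem.} Take $t=2$ in Theorem~\ref{Thm: general n56}, so that $C_{t+2}=C_4$ and $C_{t+3}=C_5$, and use $k+1$ in place of $k$. The theorem then gives
\[
c(G)\leq (2-1)\Big\lceil \frac{(k+1)-2}{2}\Big\rceil=\Big\lceil \frac{k-1}{2}\Big\rceil .
\]

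\textbf{Possible obstacle.} The only point needing care is the range of $k$ in the theorem. If $k$ is very small, for instance $k\leq 5$ so that $k+1\leq 6$ overlaps the forbidden short holes, the hypotheses are still satisfied, because forbidding a cycle twice is harmless. In the degenerate tiny cases, $G$ is $P_k$-free with $k\le 3$, and such a connected graph is complete, hence cop-win. In that case the bound holds directly. I therefore expect no real obstacle; the substance lies entirely in Theorem~\ref{Thm: general n56}.
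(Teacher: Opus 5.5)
Your proposal is correct and follows the paper's route. The paper gives no separate proof of the corollary: it is the case $t=2$ of Theorem~\ref{Thm: general n56}, using (as the introduction notes) that a $P_k$-free graph has no hole of length at least $k+1$, so $k$ is replaced by $k+1$ to get $\lceil \frac{k-1}{2}\rceil$. One caveat about the source rather than your argument: the paper's abridged proof of that theorem opens with ``Let $t\geq 3$,'' but the theorem statement and Lemma~\ref{Lemma: Guarding with conscutive forbidden holes} both explicitly allow $t=2$, so citing the case $t=2$ as you do is legitimate.
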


The rest of the paper is organized as follows.
Section~2 is used to prove Theorem~\ref{Thm: No short hole}
while Section~3 proves Theorem~\ref{Thm: general n56}.
We conclude with a discussion of future work where some conjectures are made.

\section{Graphs with no Short Holes}
\label{sec: No short hole}

The goal of this section is to prove Theorem~\ref{Thm: No short hole}.
Our argument adopts a common motif in Cops and Robbers by having cops
guard certain subgraphs as part of the cop's strategy to monotonically reduce the robber's territory.
However, unlike most arguments of this kind, we will not have cops guard geodesic paths.
We provide a more nuanced discussion of how these elements 
of the proof are similar and distinct from standard methods later in this section.

We start by noting some facts regarding how vertices in the open neighbourhood of an induced path
can be connected by short paths in 
the rest of the graph.
We will leverage these facts later, specifically in Lemma~\ref{Lemma: 1-guard},
to show that $1$-guard can protect the closed neighbourhood of a short induced path
even if that path is not geodesic.

In order to state our results we must introduce some non-standard notation regarding distances.
Given a graph $G$ and an induced subgraph $H$ of $G$, let $\dist_H(x,y)$ be the length of the shortest $(x,y)$-path in
$G$ all of whose internal vertices and edges are in $H$.
Thus, if $x,y$ are both in $V(G)\setminus V(H)$, then $\dist_H(x,y)> 1$.
Notice this does not require $x$ or $y$ to be vertices in $H$,
whereas one would normally require this.

\begin{lemma}\label{Lemma: Common neighbourhood}
    Let $t\geq 4$ be an integer,
    $G$ a $(C_4,\dots, C_{t})$-free
    graph, let $P$ be an induced path in $G$ of order $\ell < t-1$,
    and let $H$ be $G-N[P]$.
    If $x,y \in N(P)$ are non-adjacent, then
    $\dist_{H}(x,y)\geq t-\ell$.
    Moreover, if $x,y \in N(P)$ are adjacent, 
    then $\dist_{H}(x,y)\geq  t$ or $\dist_H(x,y) = 2$.
\end{lemma}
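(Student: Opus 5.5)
Let $Q$ be a shortest $(x,y)$-path all of whose internal vertices lie in $H=G-N[P]$, and let $d=\dist_H(x,y)$ be its length. Since $x,y\in N(P)$ and $Q$ avoids $N[P]$ internally, no internal vertex of $Q$ has a neighbour in $P$. The plan is to close $Q$ into a cycle through $P$ and extract a hole whose length is controlled by $d$ and $\ell$; the hypothesis that $G$ has no holes of length $4,\dots,t$ then bounds $d$.

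\textbf{Non-adjacent case.} Choose $a\in P$ adjacent to $x$ and $b\in P$ adjacent to $y$, taking $a$ and $b$ as close as possible along $P$. If $x$ has several neighbours on $P$, pick the one nearest the $y$-side; do the same for $y$. This gives a subpath $P'$ of $P$ from $a$ to $b$ with no interior vertex adjacent to $x$ or $y$. (Should $a=b$, or $P'$ contain a neighbour of the opposite endpoint, we use the extremal choice to shrink to such a segment.) Now form the cycle $C = x\,Q\,y\,b\,P'\,a\,x$. It is induced:
\begin{itemize}
\item $Q$ is a shortest path, hence chordless;
\item $P'$ is induced;
\item internal vertices of $Q$ have no neighbours on $P$;
\item $x$ and $y$ have no neighbours in the interior of $P'$ by the choice of $a,b$;
\item $xy\notin E$.
\end{itemize}
The length of $C$ is $d+2+|E(P')|\le d+2+(\ell-1)=d+\ell+1$. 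Also $C$ has length at least $4$, because $x,y$ are non-adjacent and $d\ge 2$. Since $G$ has no hole of length in $[4,t]$, we get $d+\ell+1\ge t+1$, that is, $d\ge t-\ell$.

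\textbf{Adjacent case.} Here $d\ge 2$ by definition. Suppose $2<d$; we aim to show $d\ge t$ by a similar argument. Let $Q=x\,q_1\dots q_{d-1}\,y$. Because $d$ is minimal, no $q_i$ is adjacent to both $x$ and $y$, except possibly when $d=2$. With $xy\in E$, the cycle $x\,Q\,y\,x$ has length $d+1$. We claim it is induced: a chord $xq_i$ with $i>1$ or $yq_i$ with $i<d-1$ would shortcut $Q$, contradicting the minimality of $d$. So this cycle is a hole of length $d+1\ge 4$ (using $d\ge3$), and therefore $d+1\ge t+1$, giving $d\ge t$. Note that this case does not even use $P$.

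\textbf{Main obstacle.} The delicate step is the choice of $a,b$ in the non-adjacent case, so that $x$ and $y$ have no chords into $P'$. If $x$ and $y$ have a common neighbour on $P$, then $d$ is compared with a cycle of length $d+2$, and the bound $d\ge t-1\ge t-\ell$ still holds. In general, take $a\in N(x)\cap P$ and $b\in N(y)\cap P$ minimizing their distance along $P$. Then the interior of $P'$ contains no neighbour of $x$ or $y$, and the length bound $\le d+\ell+1$ is preserved.
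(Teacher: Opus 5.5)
Your proof is correct and follows the paper's own argument. In the non-adjacent case you close the shortest $H$-path $Q$ into a hole through the segment of $P$ between neighbours of $x$ and $y$, and in the adjacent case through the edge $xy$. Your choice of $a\in N(x)\cap V(P)$ and $b\in N(y)\cap V(P)$ at minimum distance along $P$ is a small improvement on the paper. It covers the common-neighbour case ($a=b$) without a separate argument, and it does not tacitly rely on $N(x)\cap V(P)$ being an interval, as the paper's choice ``largest neighbour of $x$ precedes smallest neighbour of $y$'' does. The same minimality also gives $xb, ya\notin E(G)$ when $a\neq b$, which your bullet list uses but should state explicitly.
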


\begin{proof}
    Let all objects be defined as in the lemma statement
    with $P = p_1\dots p_\ell$.
    Suppose for contradiction that 
    there exists non-adjacent vertices $x,y \in N(P)$
    such that $\dist_{H}(x,y)< t-\ell$.
    Since $\dist_{H}(x,y)< t-\ell$ and $x,y$ are non-adjacent there exists an induced $(x,y)$-path $Q$ in $H$ of order
    less than $t-\ell$ and order at least $3$.
    Let $Q = x q_1\dots q_c y$ be such a path.

    Note that $Q$ is an induced path, since it is a geodesic in an induced subgraph of $G$.
    If
    $N(x)\cap N(y)\cap V(P) \neq \emptyset$,
    then let $p \in N(x)\cap N(y)\cap V(P)$.
    In this case $pQ$ is a cycle of length between $4$ and $t-\ell + 1 < t$.
    But no such hole exists by assumption.
    Suppose then without loss of generality that
    $p_i$ is the largest indexed neighbour of $x$ in $P$, and $p_j$ is the smallest indexed neighbour of
    $y$ in $P$,
    and that $i<j$.
    Immediately, $C = x q_1\dots q_c y p_j p_{j-1}\dots p_i$ is an induced
    cycle with length $4\leq (j-i+1)+|V(Q)|\leq (t-\ell) + |V(P)|= t$.
    But this is a contradiction since $G$ is $(C_4,\dots, C_t)$-free.
    we conclude that if $x$ and $y$ are non-adjacent, then
    $\dist_{H}(x,y)\geq t-\ell$.

    Now, suppose that $x,y$ are adjacent 
    and $2 < \dist_{H}(x,y)\leq t-1$.
    Then,
    there exists an induced $(x,y)$-path $Q$ in $H$ of order
    at most $t$ and order at least $4$.
    Since $x,y$ are adjacent the vertices of $Q$ induce a cycle in
    $G$ of length $3 < |Q|+1 \leq t$, but this contradicts that $G$ is $(C_4,\dots, C_t)$-free.
    This completes the proof.
\end{proof}

\begin{lemma}\label{Lemma: Helly neighbourhood}
    Let $t\geq 4$ be an integer,
    let $G$ a $(C_4,\dots, C_{t})$-free
    graph, let $P$ be an induced path in $G$ of order $\ell < t-1$,
    let $H$ be $G-N[P]$,
    and let $v \in V(G)\setminus N[P]$.
    If $W$ is the set of all vertices $w \in N(P)$ with $\dist_{H}(w,v) < \frac{t-\ell}{2}$,
    then $W$ is a clique and there exists a vertex $u \in V(P)$ such that $W\subseteq N(u)$.
\end{lemma}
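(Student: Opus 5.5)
The plan is to prove the two conclusions separately: first that $W$ is a clique, using Lemma~\ref{Lemma: Common neighbourhood}, and then that the neighbourhoods of the vertices of $W$ on $P$ have a common vertex, using a Helly-type argument for subpaths of $P$. Write $P=p_1\dots p_\ell$. If $|W|\le 1$ the statement is trivial, since every vertex of $W\subseteq N(P)$ has a neighbour on $P$.

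\emph{Clique.} Let $x,y\in W$ be distinct. Concatenate a shortest $(x,v)$-path and a shortest $(v,y)$-path whose internal vertices lie in $H$. Since $v\in V(H)$, this gives an $(x,y)$-walk all of whose internal vertices lie in $H$, of length
\[
\dist_H(x,v)+\dist_H(v,y) < \frac{t-\ell}{2}+\frac{t-\ell}{2} = t-\ell .
\]
Hence $\dist_H(x,y)<t-\ell$. By Lemma~\ref{Lemma: Common neighbourhood}, non-adjacent $x,y\in N(P)$ satisfy $\dist_H(x,y)\ge t-\ell$, so $x$ and $y$ must be adjacent. Thus $W$ is a clique.

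\emph{Neighbourhoods on $P$ are subpaths.} Let $w\in N(P)$, and suppose $w$ is adjacent to $p_i$ and $p_j$ with $j>i+1$ but to no $p_m$ with $i<m<j$. Then $w p_i p_{i+1}\dots p_j$ is a hole of length $j-i+2$. This length lies between $4$ and $\ell+1\le t$, which is forbidden. So $N(w)\cap V(P)$ is a nonempty set of consecutive vertices of $P$, i.e.\ an interval.

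\emph{Pairwise intersection.} Let $x,y\in W$. By the first step they are adjacent; suppose $N(x)\cap N(y)\cap V(P)=\emptyset$. Choose $p_i\in N(x)$ and $p_j\in N(y)$ minimizing $|i-j|$, say $i<j$. By minimality, $x$ has no neighbour among $p_{i+1},\dots,p_j$ and $y$ has none among $p_i,\dots,p_{j-1}$. Then $x p_i p_{i+1}\dots p_j y$ is a hole of length $j-i+3$. This length is at least $4$ and at most $\ell+2\le t$, because $\ell<t-1$. That is a contradiction, so the intervals $N(w)\cap V(P)$, $w\in W$, pairwise intersect.

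\emph{Common vertex.} By the Helly property for intervals on a path, a family of pairwise intersecting intervals has a common point. Concretely, let $u$ be the vertex $p_m$ with $m$ the maximum left endpoint of these intervals; every interval has left endpoint at most $m$ and, by pairwise intersection, right endpoint at least $m$, so it contains $p_m$. Hence $W\subseteq N(u)$.

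The only delicate point I anticipate is in the pairwise-intersection step: the cycle must really be induced and have length within the forbidden range. The minimal-distance choice of $i,j$ together with the bound $\ell<t-1$ is what secures both.
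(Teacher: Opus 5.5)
Your proof is correct and follows the paper's argument step for step. It derives the clique property from Lemma~\ref{Lemma: Common neighbourhood} via the triangle inequality through $v$, the interval structure of $N(w)\cap V(P)$ from short holes, pairwise intersection from a hole of length at most $\ell+2\le t$, and the common vertex from the Helly property for intervals. Two small differences from the paper improve rigour and make it clear the closing cycle is induced: you pick the closest pair $p_i\in N(x)$, $p_j\in N(y)$ rather than the paper's largest- and smallest-index neighbours, and you explicitly use $xy\in E(G)$ to close the cycle.
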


\begin{proof}
    Let all objects be defined as in the lemma statement.
    Let $P = v_1,\dots, v_\ell$.
    Suppose there are two distinct vertices $w,z \in W$. 
    By the definition of $W$, for any distinct vertices $w,z \in W$ 
    it is true that $\dist_{H}(w,z) < t-\ell$.
    Hence, by Lemma~\ref{Lemma: Common neighbourhood} we have that $(w,z)\in E(G)$.
    Since $w,z\in W$ were selected without loss of generality $W$ is a clique, as required.

    Since $P$ is an induced path of length less than $t-1$ and $G$ is 
    $(C_4,\dots, C_{t})$-free, for all $w \in N(P)$, it is easy to see that
    $N(w)\cap V(P)$ induces connected subpath of $P$.
    Thus, the set $X = \{N(w)\cap V(P): w \in W\}$ satisfies the Helly property, 
    since any set of subintervals of a line satisfy the Helly property.
    Recall that the Helly property for a set of set $S$ states that if all elements of a subset $X \subseteq S$
    pairwise intersect, then there exists $x \in \bigcap_{A\in X} A$.
    
    Now,
    let $x,y\in W$ be distinct vertices.
    We claim $N(x)\cap N(y)\cap V(P)\neq \emptyset$.
    Suppose otherwise for contradiction.
    Then, without loss of generality let $v_i$ be the largest index neighbour of $x$ and $v_j$
    be the smallest index neighbour of $y$, both in $P$,
    such that $i<j$.
    Then, $xp_ip_{i+1}\dots p_jy$ is a hole of length at least $4$
    and length at most $\ell+2\leq t$.
    This is a contradiction since $G$ is $(C_4,\dots, C_{t})$-free.
    Given $x,y$ were selected without loss of generality, we conclude that
    all sets in $X$ intersect.
    
    Since $X$ satisfies the Helly property, and for all pairs $A,B \in X$
    we have demonstrated $A\cap B \neq \emptyset$,
    we conclude that
    \[
    \bigcap_{x \in S} \Big(N(x)\cap V(P)\Big) \neq \emptyset.
    \]
    Therefore, there exists a vertex $u \in V(P)$
    such that $W \subseteq N[u]$ as desired.
    This concludes the proof.    
\end{proof}

As has already been mentioned, 
we will not guard paths in the standard manner originally introduced in \cite{AIGNER1984}.
The standard method involved $1$ cop protecting a shortest path, and has been widely applied, see for instance \cite{andreae1986pursuit,frankl1987cops,kenter2025improved}.
It is well known that a shortest path is always $1$-guardable regardless of the host graph. 
A subgraph $H$ in $G$ is \emph{guarded} by a set of cops, if the cops can from their current position,
move in such a way that if the robber ever enters $H$, then the cops will capture the robber on their next turn.

Path guarding arguments have been generalized in various ways.
For example \cite{de2024guarding} characterizes 
all geodesic subgraphs which are $1$-guardable, regardless of their host graph.
Notice that if one wants a subgraph to be guardable regardless of which graph it is hosted in,
then assuming the subgraph is geodesic is necessary.
Here a subgraph is \emph{geodesic} if the distances in the subgraph are preserved in the host graph.
Meanwhile other work, see for example
\cite{clow2025cops,de2024guarding, lehner20262},
explores how this kind of strategy can be extended to protect paths in settings where
the cop's movement is constrained.
Some quite recent work from \cite{bose2026cops}
has also provided machinery for efficiently guarding sets of vertices with small radius.

In Lemma~\ref{Lemma: 1-guard}
we take a different approach to generalizing these ideas.
Specifically, rather than constraining the cops or discussing subgraphs other than paths
we relax the assumption that our path is geodesic by restricting the structure of the host graph.
By doing so we will be able to guard paths, in fact the closed neighbourhood of paths, which are not necessarily geodesic.
This grants a massive advantage when working with dense graphs, 
noting of course that graphs with forbidden induced subgraphs tend to be denser
than many of the graphs normally considers in a path guarding argument.

\begin{lemma}\label{Lemma: 1-guard}
    Let $t\geq 12$ be an integer,
    let $G$ a $(C_4,\dots, C_{t})$-free
    graph,  let $P$ be an induced path in $G$ of order $\ell \leq \frac{t}{4}$, 
    and let $H$ be $G-N[P]$.
    If the robber currently occupies a vertex $r \in V(G)\setminus N[P]$,
    and there exists a cop $C$ in either the situation 
    \begin{enumerate}[(1)]
        \item it is the robber's turn to move and $C$ occupies a $u\in N(P)$ such that 
    $\dist_{H}(u,r)\leq \lfloor \frac{t}{2}-\ell-1\rfloor$, or 
    
        \item it is the cops turn to move and $C$ occupies a vertex in $P$ and $\dist_H(r,N(P)) \geq \lfloor \frac{t}{2}-\ell-1\rfloor$,
    \end{enumerate}
    then from this position $C$ can guard $N[P]$ indefinitely.
\end{lemma}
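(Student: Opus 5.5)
The plan is to give an explicit cop strategy that alternates between two modes, matching situations (1) and (2). Write $d=\lfloor \frac{t}{2}-\ell-1\rfloor$; since $\ell\le t/4$ and $t\ge 12$, we have $d\ge t/4-1\ge 2$. We will maintain the following invariant. Either the robber is \emph{far}: $\dist_H(r,N(P))\ge d$ and $C$ is on $P$ (or can reach $P$ before it matters). Or the robber is \emph{near}: $C$ sits on a vertex of $N(P)$ lying in the set $W$ of Lemma~\ref{Lemma: Helly neighbourhood} for the current robber position, and it "shadows" the robber's projection onto $N(P)$. The key point is that whenever the robber is at $H$-distance less than $\frac{t-\ell}{2}$ from $N(P)$, Lemma~\ref{Lemma: Helly neighbourhood} says that all vertices of $N(P)$ it could reach quickly form a clique $W$ contained in $N(u)$ for a single $u\in V(P)$. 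Note $d<\frac{t-\ell}{2}$, so this applies throughout the near regime.

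\textbf{Near regime, situation (1).} $C$ is at $u'\in N(P)$ with $\dist_H(u',r)\le d$. After the robber moves to $r'$, we have $\dist_H(u',r')\le d+1<\frac{t-\ell}{2}$ (using $\ell\le t/4$, $t\ge 12$). If $r'$ has entered $N[P]$, it entered at a vertex $w\in N(P)$ with $\dist_H(w,\text{previous }r)\le 1$. In that case both $u'$ and $w$ lie in the set $W$ computed from the previous robber position, so $w$ is adjacent or equal to $u'$, because $W$ is a clique. Hence $C$ captures. Otherwise $C$ must move to restore $\dist_H(C,r')\le d$. If the robber's $H$-distance to $N(P)$ is still at most $d-1$, choose a vertex $w$ of $N(P)$ closest in $H$ to $r'$. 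Then $w$ and $u'$ both lie in $W(r')$, so they are adjacent, and $C$ steps to $w$, restoring the invariant. If instead the robber's distance to $N(P)$ has reached $d$, then $C$ steps from $u'$ to the common neighbour $u\in V(P)$ guaranteed by Lemma~\ref{Lemma: Helly neighbourhood}, which puts us in the far regime. I expect a one-step timing offset here; the slack between $d+1$ and $\frac{t-\ell}{2}$ should absorb it.

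\textbf{Far regime, situation (2).} $C$ is on $P$ and $\dist_H(r,N(P))\ge d$. The cop should walk along $P$ toward the vertex $u(r)\in V(P)$ provided by Lemma~\ref{Lemma: Helly neighbourhood} for the robber's current neighbourhood set $W(r)$; this set is nonempty once the distance drops below $\frac{t-\ell}{2}$. Since $|P|=\ell$, the cop reaches any vertex of $P$ within $\ell-1$ moves. The robber needs at least $d$ moves to reach $N(P)$. The real danger is that $u(r)$ changes as the robber moves, but by the clique/Helly conclusion any two consecutive sets $W$ share structure. More precisely, $W(r)\cup W(r')$ is still a set of vertices within $H$-distance $<\frac{t-\ell}{2}$ of $r$, so Lemma~\ref{Lemma: Helly neighbourhood} gives one $u$ dominating both. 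Thus the target moves by at most a small amount along $P$ each turn, and the cop can keep pace. When the robber reaches distance $1$ from $N(P)$, the cop is at $u$ adjacent to all of $W$. The cop then either steps into the appropriate $w\in W$ to re-enter situation (1), or simply stays at $u$ and captures any entry into $W\subseteq N(u)$. Entry directly to $P$ from $H$ is impossible, since $P$'s neighbours are in $N(P)$.

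\textbf{Main obstacle.} The hardest step is the far-regime catch-up: showing the cop, moving along $P$, arrives at the dominating vertex $u$ in time. Here the inequality $\ell\le t/4$ (giving $d\ge \ell-1$ roughly) must be used. I would first attempt to prove that once $\dist_H(r,N(P))<\frac{t-\ell}{2}$, the Helly vertex can be chosen consistently over all future robber positions within the regime, so that it becomes a fixed target. I would justify this by applying Lemma~\ref{Lemma: Helly neighbourhood} to the union of the $W$-sets along the robber's trajectory, all of which lie within $H$-distance $<\frac{t-\ell}{2}$ of a single robber position whenever the trajectory is short. If that fails, the fallback is a potential argument: the cop's distance along $P$ to the current Helly target decreases by at least one every time the robber makes progress toward $N(P)$.
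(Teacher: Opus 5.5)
\paragraph{Verdict.} There is a genuine gap. It lies in the far regime (situation~(2)), which is where the content of the lemma is. Your near regime is essentially the paper's Case~1 and works once the boundary is moved, as explained next.

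\paragraph{Near regime: the boundary is off by one.} Keep shadowing whenever the robber's new position $r'$ satisfies $\dist_H(r',N(P))\le d$. In that case both $u'$ and a closest $w\in N(P)$ lie within $d+1<\frac{t-\ell}{2}$ of $r'$, so they are adjacent and $C$ steps to $w$. Retreat to $P$ only when $\dist_H(r',N(P))\ge d+1$. With your split at exactly $d$, after the retreat and the robber's reply the distance can be $d-1$, which is not situation~(2).

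\paragraph{Far regime: the tracking argument does not go through.}
\begin{enumerate}
\item The claim that $W(r)\cup W(r')$ lies within $H$-distance $<\frac{t-\ell}{2}$ of $r$ is false. Vertices of $W(r')$ are only within $<\frac{t-\ell}{2}+1$ of $r$. This part is repairable: use threshold-$d$ sets and apply Lemma~\ref{Lemma: Helly neighbourhood} at $r'$.
\item The real problem is that overlapping consecutive Helly intervals $I(r)=\bigcap_{w\in W(r)}\bigl(N(w)\cap V(P)\bigr)$ do not let a cop moving one vertex per turn keep pace. For example, one can have $I(r)=V(P)$ (say $W(r)=\emptyset$) with the cop at $p_1$, and then $I(r')=\{p_\ell\}$. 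So ``the cop can keep pace'' and ``when the robber reaches distance $1$ the cop is at $u$'' are unsupported; some timing argument is needed.
\item Your fixed-target fallback also fails as stated. The robber can hover near distance $d$ indefinitely, so the trajectory need not be short. In any case the cop must commit to a target without knowing the robber's future positions.
\end{enumerate}

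\paragraph{The missing idea: commit, then hand off.} The paper does not track at all.
\begin{enumerate}
\item \emph{Wait.} $C$ stays on $P$ until the robber first stands at some $z$ with $\dist_H(z,N(P))=d$.
\item \emph{Commit.} $C$ fixes a Helly vertex $p_j$ of $Z=\{v\in N(P):\dist_H(z,v)\le d\}$ and walks there in $j-i\le \ell-1\le d$ moves. This is exactly where $\ell\le t/4$ is used. Any entry into $N(P)$ during the walk lands in $Z\subseteq N(p_j)$, so it is caught.
\item \emph{Hand off.} Let the robber now be at $m$ and set $M=\{v\in N(P):\dist_H(m,v)\le d\}$.
  \begin{itemize}
  \item If $M=\emptyset$, you are back in situation~(2).
  \item If $M$ meets $Z$, $C$ steps into $Z\cap M$, giving situation~(1).
  \item Otherwise, for $a\in Z$ and $b\in M$ the triangle inequality through the robber's short trajectory gives $\dist_H(a,b)\le 2d+(j-i)\le t-\ell-3<t-\ell$. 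By Lemma~\ref{Lemma: Common neighbourhood}, $Z$ is complete to $M$. So $C$ steps to any $a\in Z$, and the robber cannot enter $N(P)$ on its next move. After it moves to $q$, the bound $\dist_H(a,x)\le t-\ell-2$ shows that $a$ is adjacent or equal to every $x\in N(P)$ within $d$ of $q$. This restores situation~(1).
  \end{itemize}
\end{enumerate}
Using Lemma~\ref{Lemma: Common neighbourhood}, not Helly alone, to certify adjacency at the hand-off is what your proposal lacks.
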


\begin{proof}
    Let all objects be defined as in the lemma statement with $P = p_1\dots p_\ell$.
    We suppose without loss of generality that the robber never moves adjacent to $C$.
    We claim that the cop $C$ can indefinitely prevent
    the robber from entering $N[P]$.
    For contradiction we suppose the robber has a strategy that allows them to enter $P$,
    as the cop $C$ can remain still until the robber moves,
    we suppose without loss of generality that the robber moves first from $r$ to a new vertex $w$.
    Trivially, the robber must enter $N(P)$ to enter $N[P]$ 
    so it is sufficient to show the cop $C$ can prevent the robber entering $N(P)$.
    We play until the robber first enters $N(P)$,
    meaning that the robber must play in $H$ for the duration of play we are considering.
    
    To begin, we observe that if $C$ is in state (1) or (2), then the robber cannot move into 
    $N(P)$ on their next move.
    This is obvious for state (2), since state (2) assumes 
    $\dist_H(r,N(P)) \geq \lfloor \frac{t}{2}-\ell-1\rfloor > 1$
    given $t\geq 12$.
    To observe that same in state (1), 
    let $R$ be the set of all vertices $v$ in $N(P)$
    with $\dist_{H}(r,v) \leq \lfloor \frac{t}{2}-\ell-1\rfloor$.
    By Lemma~\ref{Lemma: Helly neighbourhood} $R$ is a clique, and in state (1)
    $C$ occupies a vertex in $R$.
    Hence, the robber cannot move into $N(P)$ on their next turn if we are in state (1) or (2).
    Thus, it is sufficient to show that no matter how the robber plays, 
    the cop beginning in state (1) or (2)
    can return to state (1) or (2) without allowing the robber to enter $N(P)$ during the intervening turns.

    \vspace{0.25cm}
    \noindent\underline{Case.1:} The cop starts in state (1).
    \vspace{0.25cm}

    If the robber does not move, the cop can remain still and stay in state (1).
    Suppose then without loss of generality that the robber moves from $r$ to some vertex $w$.
    Let $R$ be defined as before, 
    and let $W$ be the set of all vertices $v$ in $N(P)$
    with $\dist_{H}(w,v) \leq \lfloor \frac{t}{2}-\ell-1\rfloor$.
    By Lemma~\ref{Lemma: Helly neighbourhood} we note that both $R$ and $W$ are cliques.
    If $R \cap W\neq \emptyset$, then $C$ moves from $u$ to a vertex in $R\cap W$, this may mean $C$ remains on $u$.
    After this move the cop has returned to state (1).
    Suppose then that $R \cap W = \emptyset$.

    Observe that if $\dist_{H}(r,N(P)) \leq \lfloor \frac{t}{2}-\ell-1\rfloor-1$,
    then it is impossible for $W\cap R=\emptyset$,
    by the definition of $R$ and $W$ and the fact that 
    $\dist_{H}(x,w) \leq \dist_{H}(x,r) + 1$.
    Then our assumption that $R$ is not empty, since $C$ began in state (1),
    and our assumption $R \cap W = \emptyset$, we can conclude that $\dist_{H}(r,N(P)) = \lfloor \frac{t}{2}-\ell-1\rfloor$.
    Next, we note that $\dist_{H}(w,N(P)) >  \lfloor \frac{t}{2}-\ell-1\rfloor$
    if and only if
    $W = \emptyset$.
    We consider the cases where $W$ is empty vs non-empty separately.

    \vspace{0.25cm}
    \noindent\underline{SubCase.1.1:} $W$ is non-empty.
    \vspace{0.25cm}

    Since $W$ is non-empty there exists $b\in W$.
    Let $b$ be such a vertex.
    Then, 
    $\dist_{H}(w,b) \leq \lfloor \frac{t}{2}-\ell-1\rfloor$
    and $\dist_H(u,w) = \dist_H(u,r)+1 \leq \frac{t}{2} - \ell$.

   If $(u,b) \notin E(G)$, then by the triangle inequality
   \begin{align*}
       \dist_H(u,b) & \leq \dist_H(u,w)+\dist_H(w,b) \\
       & \leq 1+ \dist_H(u,r)+\dist_H(w,b) \\
       & \leq 1+2\Big(\frac{t}{2} - \ell - 1\Big) \\
       & = t-2\ell -1 \\
       & < t-\ell.
   \end{align*}
    But Lemma~\ref{Lemma: Common neighbourhood} implies that $\dist_H(u,b)\geq t-\ell$
    since $(u,b) \notin E(G)$, a contradiction.
    
    Suppose then $(u,b) \in E(G)$.
    In this case the cop $C$ moves from $u$ to $b$.
    This returns $C$ to state (1) as required completing the subcase.

    \vspace{0.25cm}
    \noindent\underline{SubCase.1.2:} $W$ is empty.
    \vspace{0.25cm}

    In this case, the $C$ moves from $u$ to a vertex in $P$.
    This is possible since $u \in N(P)$.
    This moves $C$ into state (2) as required completing the subcase. 
    \hfill $\diamond$
    \vspace{0.25cm}
    
    \vspace{0.25cm}
    \noindent\underline{Case.2:} The cop starts in state (2).
    \vspace{0.25cm}

    Call the vertex of $P$ occupied by the cop $p_i$.
    From here $C$ does not move until the robber enters a vertex $z$
    such that $\dist_{H}(z,N(P)) \leq  \lfloor \frac{t}{2}-\ell-1\rfloor$.
    Suppose $z$ is the first such vertex that the robber has entered.
    Then, $\dist_{H}(z,N(P)) =  \lfloor \frac{t}{2} - \ell - 1\rfloor$
    and it is the cop's turn to move after the robber enters $z$.

    Let $Z$ be the set of all vertices $v$ in $N(P)$
    with $\dist_{H}(z,v) \leq \lfloor \frac{t}{2}-\ell-1\rfloor$.
    By our choice of $z$, the set $Z$ is not empty.
    By Lemma~\ref{Lemma: Helly neighbourhood} there is a vertex $p_j \in V(P)$
    such that $Z \subseteq N(p_j)$.
    If $i=j$, then the cop $C$ moves to a vertex of $Z$ returning the game to state (1).
    Suppose then without loss of generality that $i < j$.

    The cop $C$ moves to $p_j$ along a shortest path in $P$.
    Since the cop moves first this will take $j-i$ steps, 
    in which time the robber only gets to move $j-i-1$ times.
    By our assumption that $\ell \leq \frac{t}{4}$, it is true that
    \[
    j-i-1 \leq \ell-2 < \Big\lfloor \frac{t}{2} - \ell - 1\Big\rfloor.
    \]
    So the cop will arrive at $p_j$ before the robber can enter a vertex of $N(P)$.
    Notice that once $C$ arrives to $p_j$, the robber cannot enter $N(P)$ on their next turn
    because any vertex that they could reach in $j-i$ steps is in $Z$ and $Z \subseteq N(p_j)$.

    Let $m$ be the location of the robber when 
    the cop $C$ next get to move after arriving at $p_j$,
    and let $M$ be the set all vertices $v$ in $N(P)$
    with $\dist_{H}(m,v) \leq \lfloor \frac{t}{2}-\ell-1\rfloor$.
    If $M = \emptyset$ then the robber is at a distance greater than 
    $\lfloor \frac{t}{2}-\ell-1\rfloor$ from $N(P)$,
    meaning the game has returned to state (2).
    Otherwise $M$ is not empty. 
    
    If $Z \cap M \neq \emptyset$,
    then $C$ can move from $p_j$ to $M$,
    since $p_j$ was selected to satisfy $Z \subseteq N(p_j)$.
    If this happens, then 
    $C$ has returned to state (1).
    We suppose then that 
    $Z \cap M = \emptyset$.

    Suppose for contradiction there exists a pair of vertices $a \in Z$ and $b \in M$
    such that $(a,b) \notin E(G)$.
    It follows from Lemma~\ref{Lemma: Common neighbourhood}
    that $\dist_H(a,b)\geq t-\ell$.
    Recalling that $\dist_H(z,m) \leq j-i$ and $\dist_H(a,z) = \lfloor \frac{t}{2} - \ell - 1\rfloor$
    and $\dist_H(b,m) \leq \lfloor \frac{t}{2}-\ell-1\rfloor$
    we can see that the triangle inequality implies 
    \begin{align*}
        \dist_H(a,b) &\leq \dist_H(a,z)+\dist_H(z,m)+\dist_H(m,b)\\
        & \leq (j-i)+ 2\Bigg(\frac{t}{2} - \ell - 1\Bigg) \\ 
        & \leq (\ell-1) + t-2\ell-2 \\
        & = t-\ell-3 \\
        & < t-\ell.
    \end{align*}
    This is a contradiction.
    So we observe that $(a,b)\in E(G)$.
    Since $a$ and $b$ were selected without loss of generality 
    we conclude that $Z$ is complete to $M$.
    That is,
    every possible edge exists between $Z$ and $M$.

    It is the cop's turn, so $C$ moves from $p_j$ to any vertex $a$ in $Z$.
    Suppose the robber moves from $m$ to a vertex $q$.
    Since every possible edge exists from $Z$ to $M$, we note that $q\notin M$,
    so the robber has not moved into $N(P)$.

    Define $Q$ to be the set of all vertices $v$ in $N(P)$
    with $\dist_{H}(q,v) \leq \lfloor \frac{t}{2}-\ell-1\rfloor$.
    If $Q$ is empty, then $\dist_H(q,N(P))> \lfloor \frac{t}{2}- \ell -1\rfloor$, 
    the cop $C$ moves back to $p_j$ and we are in state (2) at the start of the cops' next turn.
    If $Q \cap M \neq \emptyset$, or $Q \cap Z \neq \emptyset$,
    then the cop at $a$ can move to some $x \in (Z\cup M)\cap Q$, which is possible since $M \cup Z\subseteq N[a]$. 
    This returns the cop to state (1) as required.
    Otherwise, $Q \cap (M \cup Z) = \emptyset$ and $Q \neq \emptyset$.
    Let $x \in Q$.

    If $(a,x) \in E(G)$, then $C$ moves from $a$ to $x$ and the game is in state (1).
    Otherwise, $(a,x)\notin E(G)$
    and as a result Lemma~\ref{Lemma: Common neighbourhood}
    implies that $\dist_H(a,x)\geq t-\ell$.
    But as before, 
    triangle inequality implies 
    \begin{align*}
        \dist_H(a,x) &\leq \dist_H(a,z)+\dist_H(z,m)+\dist_H(m,q)+\dist_H(q,x)\\
        & \leq 1+ (j-i)+ 2\Bigg(\frac{t}{2} - \ell - 1\Bigg) \\ 
        & \leq 1+ (\ell-1) + t-2\ell -2\\
        & = t-\ell-2 \\
        & < t-\ell.
    \end{align*}
    Since this is a contradiction, we observe that the cop $C$ can force the game back to state (1) or (2)
    without ever letting the robber enter $N(P)$ in the period between instances of state (1) or (2).
    \hfill $\diamond$
    \vspace{0.5cm}
    
    This completes the proof
\end{proof}

We are now prepared to prove Theorem~\ref{Thm: No short hole}.
we will use an adapted version of the Gy\'arf\'as path argument, similarly to that used in \cite{sivaraman2019cop}.
Our proof differs from \cite{sivaraman2019cop} in how we guard the induced paths involved,
which leads to us requiring substantially more subtlety in the argument.

\begin{proof}[Proof of Theorem~\ref{Thm: No short hole}]
Let $t\geq 13$ be an integer and let
    $G$ be a $(C_4,\dots, C_{t},C_k, C_{k+1},\dots)$-free
    graph. 
    We claim that  $\Big\lceil \frac{k-2}{\lfloor\frac{t}{4} \rfloor} \Big\rceil+1$
    cops have a winning strategy in $G$.
    Suppose otherwise for contradiction.
    The cop's strategy will proceed in two phases.
    To make the proof easier to read, we clearly delineate the phases.
    The game begins in Phase~1.
    
    \vspace{0.25cm}
    \noindent\underline{Phase~1:} 
    \vspace{0.25cm}

    All cops begin on the same vertex $s \in V(G)$.
    The cops' strategy in Phase~1 proceeds in stages with stage $1$ starting at the beginning of the game.
    Each stage starts with the cops' turn to move.
    Let $i\geq 1$ be fixed but arbitrary, we suppose that all stages $1$ to $i-1$ have occurred already,
    we suppose we are at the start of stage $i$.
    We suppose the game is in the following state:
    For all $1\leq j < i$, there is a distinct induced path $P^j$ of order $\lfloor\frac{t}{4} \rfloor$,
    such that $N[P^j]$ is guarded by $1$ cop who has no other responsibilities.
    All paths $P^j$ are vertex disjoint.
    Also, the union of all vertices $\cup_{j=1}^{i-1} V(P^j)$ induces a path, call it $Q$,
    while there is an endpoint $x$ of $Q$ which has a neighbour in the component of $G-N[Q\setminus \{x\}]$ containing the robber.
    Such a component must exist since we suppose the robber has not been captured and $G$ is connected.
    When $i=1$, all the previous statements hold vacuously if we take $s$ to be the only vertex in $Q$. 

    If $(i-1)\lfloor\frac{t}{4} \rfloor\geq k-2$, then we halt and proceed to Phase~2.
    Otherwise we proceed.
    Since we have not halted in step $i-1$,
    at least $1$ cop unassigned 
    to guard a path $P^j$ for $1 \leq j < i$.
    Let $C$ be such a cop.

    \vspace{0.25cm}
    \noindent\underline{Claim.1:} The cop $C$ can construct an induced path $P^i$ satisfying the induction hypothesis.
    \vspace{0.25cm}
    
    We now demonstrate how $C$ can construct a path $P^i$, satisfying all the desired assumptions, or capture the robber.
    Since we suppose the robber will not be caught, this will imply the path $P^i$ exists, 
    allowing us to proceed to the next stage.
    Let $\mathcal{R}$ be the connected component of $G-N[Q]$ containing the robber.
    By assumption $x$ has a neighbour in the connected component of $G-N[Q\setminus\{x\}]$ containing the robber.
    Then without loss of generality we take $v_1$ to be such a vertex,
    where $N(v_1)\cap \mathcal{R}\neq \emptyset$.
    Without loss of generality suppose that $C$ begins on $v_1$.
    Let $r_1$ be the starting vertex occupied by the robber, of course $r_1$ and $v_1$ are distinct, otherwise $C$ has caught the robber.
    Since $\mathcal{R}$ is a connected component this there is a $(v_1,r_1)$-path 
    all of whose internal vertices are in $\mathcal{R}$.
    Let $v_1,v_2,\dots, v_d,r_1$ be a shortest such path.

    As the following part of the game is played, we let $r_q$ be the location of the robber prior to the cop's $q^{\text{th}}$ moves in this stage.
    So the robber beings at $r_1$ and moves to $r_2$\dots etc.
    Since the robber might not move in a given turn, and the robber can revisit vertices,
    it is helpful to distinguish $r_q$ from $r_p$ even if they both correspond to the same vertex,
    hence the somewhat awkward  use of `location' rather than vertex in the previous sentence.
    We similarly distinguish between $r_q$ and a vertex $v_i$.
    We define the following linear order on these distinguished vertices
    \[
    v_1 \prec v_2 \prec \dots \prec v_d \prec r_1 \prec r_2 \prec \dots
    \]
    which we will use to define how the cop $C$ should move.

    From here we will let $u_q$ denote the cop $C$'s location
    before their $q^{\text{th}}$ move in this stage.
    So $u_1=v_1$.
    For the first $\lfloor \frac{t}{4}\rfloor$
    moves the cop $C$ always moves from 
    $u_q$
    to the vertex in $N(u_q)$ that is maximum under $\prec$ at the time they are moving.
    Notice here that as the robber moves, more locations are added to the order,
    so this stipulation that the cops moves the the largest $\prec$
    neighbour, evaluated when the cop moves, is necessary to the strategy being well defined.
    Since $v_1\dots v_dr_1$ is a $(v_1,r_1)$-path and $r_1r_2\dots r_q$ is a walk,
    and since $u_1=v_1$ this strategy is well defined, unless the cop has captured the robber, 
    since the robber's current index is always the greatest location in the order and this will be greater than the cop's current location.
    Since we suppose the robber is not caught, it follows that the cop's strategy is well defined.
    We note that this strategy means that every vertex $u_q$
    is a vertex in $\mathcal{R}$.
    Notice that as with $r_q$, in principle, it can be the case that $u_q$ and $u_p$ identify the same vertex.

    We now claim that $P^i = u_1\dots u_{\lfloor \frac{t}{4} \rfloor}$
    is an induced path of order $\lfloor \frac{t}{4} \rfloor$.
    To start, observe by definition that $P^i$ is a walk, so each vertex $u_i$ is in fact adjacent to $u_{i+1}$.

    Suppose $P^i$ is not an induced path.
    Since $P^i$ is a walk, this implies that for some integer $q$, the vertex $u_q$ has a neighbour in $\{u_1,\dots, u_{q-2}\}$.
    Here we include the possibility that $P^i$ contains a cycle of length $2$,
    that is for some $q$, $u_q = u_{q-2}$.
    Such a cycle of length $2$ corresponds to the cop $C$ leaving a vertex and returning to it on the next turn.
    Let $q$ be the smallest integer such that $u_q$ has a neighbour in $\{u_1,\dots, u_{q-2}\}$

    To start consider the case that $u_{q-2} = u_q$.
    Since the cop always moves to a vertex of largest order under $\prec$
    this implies that $u_{q-1}$ is the $\prec$ maximal neighbour of $u_{q}$
    and $u_q$ is the $\prec$ maximal neighbour of $u_{q-1}$.
    Since $u_q = u_{q-2}$ we can conclude that $u_q = r_i$ for some $i$.
    Take $i$ to be the largest index such that $u_q = r_i$.
    Since the robber is not captured by the cop, $r_i\neq r_q$ because if $u_q=r_q$, 
    then the cop $C$ has captured the robber.
    So there is a vertex $r_{i+1} \in N(u_q)$.
    We notice that, if $u_{q-1} = r_{i+1}$, then 
    $r_{i+2} \preceq u_q$ since $u_q$ is the $\prec$ maximal neighbour of $u_{q-1}$.
    But this contradicts $i$ being the maximum index such that $u_q=r_i$.
    Notice these facts could not have been effected by updating indices 
    since $C$ was on $u_{q-1}$ and $u_{q}= u_{q-2}$
    during the previous two turns while the robber remains uncaught.
    Hence, $r_{i+1}\neq u_{q-1}$.
    But this means that the maximum $\prec$ neighbour of $u_q = u_{q-2}$ 
    has updated since $C$ moved from $u_{q-2}$ to $u_{q-1}$.
    Thus, $r_q = r_{i+1}$ forcing $r_{q-1} = r_i = u_q$
    and $r_{q-2} = u_{q-1}$.
    But this implies $C$ caught the robber, a contradiction.
    Thus, $u_{q-2} \neq u_q$.
    Notice that we can conclude that $u_{q-1} \neq u_q$ follows by an even easier version of the argument.

    As a result, from the minimality of $q$, we conclude that
    $u_q$ is distinct from all vertices $u_p$ with $p<q$.
    Suppose that $p \leq q-2$ is the greatest integer such that $u_p \in N(u_q)$.
    Hence, $\mathcal{C} = u_pu_{p+1}\dots u_q$ is a cycle of length $3 \leq q-p+1  \leq \lfloor \frac{t}{4} \rfloor$.
    Consider the case that $q-p+1=3$.
    Then, $(u_q,u_{q-2})\in E(G)$.
    Since $v_1\dots v_d r_1$ is a geodesic path, this path is also induced,
    hence $u_q \notin \{v_1,\dots, v_d, r_1\}$.
    This implies $u_q = r_a$ for some $a\geq 2$ when $C$ form to $u_{q-1}$.
    Let $a$ be the largest index of a vertex $r_a=u_q$ before the cop moves to $u_{q-1}$.
    Since $u_q,u_{q-1} \in N(u_{p})$ the cop's strategy forces $r_a \prec u_{q-1} = r_b$ for some $b>a$ 
    when the cop moves to $u_{q-1}$.
    If $u_q\neq r_{b+1}$, then 
    the cop's strategy implies $u_q$ is the $\prec$ maximal neighbour of $u_{q-1}$,
    whereas $r_b$ will have $r_{b+1}$  as a neighbour while
    \[
    u_q=r_a \prec r_b \prec r_{b+1}.
    \]
   unless $r_{b+1}$ does not exist, 
   in which case $C$ has captured the robber, which we assume to not occur.
   If $u_q = r_{b+1}$ or $r_{b+1} \prec u_q$ when the cop moves from $u_{q-1}$ to $u_q$, then
   by our choice of $a$, 
   the robber
   moved to $u_q$ after the cop moved from $u_{q-2}$ to $u_{q-1}$.
   Hence, by moving from $u_{q-1}$ to $u_q$, the cop catches the robber, a contradiction.
   Otherwise, $q-p+1> 3$, in which case the minimality of $q$
   implies $\mathcal{C}$ is an induced cycle of length at least $4$
   and at most $\lfloor \frac{t}{4} \rfloor$
   contradicting that $G$ is $(C_4,\dots, C_t)$-free.
    From this we conclude that $P^i$ is
    an induced path as was claimed.

    Now we claim that there exists a neighbour of $u_{\lfloor \frac{t}{4} \rfloor}$ in the same component of 
    \[X := \mathcal{R}-(N[P^i\setminus \{u_{\lfloor \frac{t}{4} \rfloor}\}])\]
    as the robber, after the first robber turn following $C$ reaching $u_{\lfloor \frac{t}{4} \rfloor}$.
    Suppose otherwise for contradiction.
    Since $v_1\dots v_d r_1\dots r_{\lfloor \frac{t}{4} \rfloor}$ is a walk,
    the only way that the robber is in a component distinct from any neighbour of $u_{\lfloor \frac{t}{4} \rfloor}$
    is if the robber at some point moved through $(N[P^i\setminus \{u_{\lfloor \frac{t}{4} \rfloor}\}])$
    to a different component  of $X$.
    Thus, there is some vertex $r_q \notin V(P^i) \cup N(u_{\lfloor \frac{t}{4} \rfloor})$
    such that $r_q$ has a neighbour in $V(P^i)$.
    Suppose $q$ is the smallest integer such that $r_q$ has this property.
    Then, we can 
    observe that $r_q \notin N(u_{\lfloor \frac{t}{4} \rfloor})$,
    and we can notice that by the minimality of $q$ there is a path from $u_{\lfloor \frac{t}{4} \rfloor}$ to $r_{q-1}$
    in $X$.
    Letting $p$ be the largest integer such that $u_p \in N(r_q)$, and letting
    $L$ be a shortest $(u_{\lfloor \frac{t}{4}\rfloor},r_{q})$-path, whose internal vertices and edges are in $X$,
    it is clear that
    \[
    \mathcal{C} = u_pu_{p+1}\dots u_{\lfloor \frac{t}{4}\rfloor -1}L
    \]
    is an induced cycle.

    Since $G$ is $(C_4,\dots, C_t)$-free,
    this implies that $|\mathcal{C}|=3$ or $|\mathcal{C}|>t$.
    But if $|\mathcal{C}|=3$, then $r_q\in N(u_{\lfloor \frac{t}{4}\rfloor})$ a contradiction.
    So, $|\mathcal{C}|>3$ and as a result $|\mathcal{C}|>t$.
    Notice that if $u_{\lfloor \frac{t}{4}\rfloor} = r_a$ for some $a$,
    then the minimality of $q$ implies that there exists a path  in $X$,
    namely some subpath of the $r_ar_{a+1}\dots r_q$ walk,
    which has length at most $q-a \leq \lfloor \frac{t}{4}\rfloor-1$.
    Since $L$ is a shortest such path this implies $|L| \leq \lfloor \frac{t}{4}\rfloor$.
    It follows that 
    \[
    t < |\mathcal{C}| \leq |L| + \Bigg(\Big\lfloor \frac{t}{4}\Big\rfloor - p+1\Bigg) \leq 2\Big\lfloor \frac{t}{4}\Big\rfloor+1 < t
    \]
    a contradiction.
    It must be the case then that for all $a$, $u_{\lfloor \frac{t}{4}\rfloor} \neq r_a$.
    
    Given the cop $C$'s rule for moving this implies $P^i$ is a subpath of $v_1\dots v_d$.
    Suppose $u_{\lfloor \frac{t}{4}\rfloor}=v_a$, then $u_p= v_b$ where $b < a$.
    As before we note that the minimality of $q$ implies 
    there is a path from $u_{\lfloor \frac{t}{4} \rfloor}$ to $r_{q-1}$
    in $X$.
    As before let $L$ be a shortest $(u_{\lfloor \frac{t}{4}\rfloor},r_q)$-path whose internal vertices and edges 
    are in $X$.
    Again, \[
    \mathcal{C} = u_pu_{p+1}\dots u_{\lfloor \frac{t}{4}\rfloor-1}L
    \]
    is an induced cycle
    and $|\mathcal{C}|>3$ forcing $|\mathcal{C}|> t$.
    Notice that 
    there exists a path  in $X$,
    namely some subpath of the $v_{a+1}\dots v_dr_1r_{1}\dots r_q$ walk,
    which has length at most $(d-a)+q \leq (d-a)+\lfloor \frac{t}{4}\rfloor$.
    Also recall that the path $v_1\dots v_d r_1$ is a geodesic path in all of $\mathcal{R}$.
    Hence, the distance $\dist_\mathcal{R}(r_1,u_p) = d-b+1$.
    Because the robber starts on $r_1$ and at $r_q$ the robber is adjacent to $u_p$ this implies that
    that $d-b\leq q \leq \lfloor \frac{t}{4}\rfloor$.
    From this we can deduce that 
    $d-b \leq \lfloor\frac{t}{4}\rfloor$ implying that
    \[
    t< |\mathcal{C}| \leq (a-b)+(d-a+1) + q  = (d-b+1)+q \leq   2\Big\lfloor \frac{t}{4}\Big\rfloor+1 < t
    \]
    also a contradiction.
    Thus, we conclude that there is in fact a neighbour of $u_{\lfloor \frac{t}{4}\rfloor}$ in the same component of 
    $X$
    as the robber after $P^i$ is finished being defined.
    \hfill $\diamond$
    \vspace{0.5cm}

    \vspace{0.25cm}
    \noindent\underline{Claim.2}: The cop $C$ can guard $N[P^i]$ from the first turn after they reach $u_{\lfloor \frac{t}{4} \rfloor}$.
    \vspace{0.25cm}
    
    The final claim in Phase~1 is
    that the cop can guard $N[P^i]$ from the first turn after they reach $u_{\lfloor \frac{t}{4} \rfloor}$.
    Suppose then the robber has made exactly $1$ move after the cop reached $u_{\lfloor \frac{t}{4} \rfloor}$
    and it is the cop's turn.
    Observe that the robber could not
    have moved into $N(P^i)$ without having moved into $N(C) = N(u_{\lfloor \frac{t}{4} \rfloor})$
    by the same arguments as the previous claim.
    Let $r$ denote the current location of the robber, then $r \in \mathcal{R} - N[P^i]$.
    Let $Y := \mathcal{R} - N[P^i]$.

    If $\dist_{Y}(r,N(P^i))\geq \lfloor \frac{t}{2}- \lfloor \frac{t}{4}\rfloor-1\rfloor$,
    then by Lemma~\ref{Lemma: 1-guard} (2), the cop $C$ can guard $N[P^i]$
    indefinitely.
    Suppose then that $\dist_{Y}(r,N(P^i))< \lfloor \frac{t}{2}- \lfloor \frac{t}{4}\rfloor-1\rfloor \leq \frac{t}{4}$.
    If $\dist_{Y}(r,N(u_{\lfloor \frac{t}{4} \rfloor}))\leq \lfloor \frac{t}{2}- \lfloor \frac{t}{4}\rfloor-1\rfloor$,
    then the cop $C$ moves to a vertex $z \in N(u_{\lfloor \frac{t}{4} \rfloor})$
    such that 
    $
    \dist_{Y}(r,z)\leq \lfloor \frac{t}{2}- \lfloor \frac{t}{4}\rfloor-1\rfloor
    $
    and by 
    Lemma~\ref{Lemma: 1-guard} (1), the cop $C$ can guard $N[P^i]$
    indefinitely.
    Otherwise,
    \[
    \dist_{Y}(r,N(P^i))< \dist_{Y}(r,N(u_{\lfloor \frac{t}{4} \rfloor})).
    \]
    Let $w \in N(u_p)$ be a vertex with minimum distance, in $Y$, to $r$ in $N(P^i)$,
    and let $z$ be the vertex with minimum distance, in $Y$, to $r$ in $N(u_{\lfloor \frac{t}{4} \rfloor})$.
    Without loss of generality choose $p$ as large as possible.
    It is the cop's turn and they move from $u_{\lfloor \frac{t}{4} \rfloor}$ to $z$.
    Now it is the robber's turn and they have not moved yet.

    If $u_{\lfloor \frac{t}{4} \rfloor} = r_a$ for some $a$, then 
    $r_a\dots r_{\lfloor \frac{t}{4} \rfloor-1} r$ is a walk, 
    whose only vertices in $N[P^i]$ are $r_a$ and $r_{a+1}$
    by minimality of $q$.
    Hence, $\dist_{Y}(r,N(u_{\lfloor \frac{t}{4} \rfloor})) \leq \lfloor \frac{t}{4} \rfloor$.
    Hence, if $(w,z)\notin E(G)$, then 
    $w,z$ are non-adjacent vertices in $N(P^i)$ and by the triangle-inequality 
    \[
    \dist_Y(w,z) \leq \dist_Y(z,r)+\dist_Y(r,w) \leq \frac{t}{4} + \frac{t}{4} < t-\lfloor \frac{t}{4} \rfloor
    \]
    contradicting Lemma~\ref{Lemma: Common neighbourhood}.
    Thus, if $u_{\lfloor \frac{t}{4} \rfloor} = r_a$ for any $a$, then $(w,z)\in E(G)$.

    If $u_{\lfloor \frac{t}{4} \rfloor} \neq r_a$ for some $a$, then 
    it follows that $P^i = v_1\dots v_\ell \dots v_{\lfloor \frac{t}{4} \rfloor}$ where $u_{\lfloor \frac{t}{4} \rfloor} = v_{\lfloor \frac{t}{4} \rfloor}$.
    Here $u_{p} = v_p$.
    In this case, since $v_1 \dots v_{d}r_1$ is a shortest path in all of $\mathcal{R}$
    we notice that
    \begin{align*}
         \frac{t}{4}  &> \dist_{Y}(r,w) \\
        &\geq \dist_\mathcal{R}(w,r) \\
        & \geq \dist_\mathcal{R}(u_p,r)-1.
    \end{align*}
    From here we note that
    since $v_1\dots v_dr_1$ was a geodesic in $\mathcal{R}$, we have
    $\dist_\mathcal{R}(u_p,r_1) = d-p+1$.
    Meanwhile $\dist_\mathcal{R}(r_1,r) \leq \lfloor \frac{t}{4}\rfloor$.
    Since $\dist_\mathcal{R}(u_p,r) \leq \frac{t}{4}$
    we conclude by the triangle inequality that
    \[
    d-p+1 = \dist_\mathcal{R}(u_p,r_1) \leq \dist_\mathcal{R}(u_p,r)+\dist_\mathcal{R}(r_1,r) \leq \frac{t}{4} + \lfloor \frac{t}{4}\rfloor \leq \frac{t}{2}.
    \]
    Thus, $d \leq \frac{t}{2} + p-1$.

    From here we notice that, again by the triangle inequality, and the fact that the robber never entered $N(P^i)$
    while the cop was travelling to $u_{\lfloor \frac{t}{4} \rfloor}$  by the minimality
    of $q$, that
    \begin{align*}
        \dist_Y(z,r) & < \dist_\mathcal{R}(u_{\lfloor \frac{t}{4}\rfloor } ,r_1) + \dist_\mathcal{R}(r_1,r) \\
        &\leq \Big(d-\lfloor \frac{t}{4} \rfloor +1 \Big) + \lfloor \frac{t}{4} \rfloor \\
        & = d+1 \\
        & \leq \frac{t}{2} + p.
    \end{align*}
    Hence, $\dist_Y(z,r) \leq \frac{t}{2} + p-1$.

    It follows that 
    \begin{align*}
        \dist_Y(w,z) &\leq \dist_Y(w,r)+\dist_Y(r,z) \\
        &< \frac{t}{4} + \Big( \frac{t}{2} + p-1\Big)\\
        & = \frac{3t}{4} + p -1
    \end{align*}
    Letting $\mathcal{P} = u_{p}\dots u_{\lfloor \frac{t}{4}\rfloor}$, we note that $\mathcal{P}$
    is an induced path of order $\lfloor \frac{t}{4}\rfloor-p+1$.
    Now,
    if $(w,z)\notin E(G)$, then 
    applying Lemma~\ref{Lemma: Common neighbourhood}
    we observe that 
    \begin{align*}
        \frac{3t}{4} + p -1 & > \dist_Y(w,z)\\
        &\geq \dist_{\mathcal{R}-N[\mathcal{P}]}(w,z) \\
        & \geq t-|V(\mathcal{P})|
        & = t - \Big(\lfloor \frac{t}{4}\rfloor-p+1\Big) \\
        & \geq \lceil \frac{3t}{4}\rceil +p-1
    \end{align*}
    a contradiction.
    Thus, $(w,z)\in E(G)$ regardless of whether $u_{i\lfloor \frac{t}{4} \rfloor} = r_a$ for some $a$ or not.
    Since $w$ was selected without loss of generality with respect to vertices in $N(P^i)$
    that minimize distance in $Y$ to $r$,
    we conclude that all such vertices are neighbours of $z$.

    Now, the it is the robber's turn, the cop occupies $z$ and there is an edge $(w,z)\in E(G)$.
    The robber moves from $r$ to some vertex $R$.
    These might be the same vertex.
    If $R \in N(P^i)$, then $(R,z)\in E(G)$ and the cop on $z$ can catch the robber on $R$,
    since we have shown any vertex in $N(P^i)$
    that minimize distance in $Y$ to $r$ is a neighbour of $z$.
    So we suppose that $R\notin N(P^i)$.
    We notice that it is the cop's turn to move and
    \[
    \dist_Y(w,R) \leq \dist_Y(w,r) + 1 \leq \lfloor \frac{t}{2}-\lfloor\frac{t}{4}\rfloor-1\rfloor
    \]
    by our assumption that $\dist_Y(w,r) < \lfloor \frac{t}{2}-\lfloor\frac{t}{4}\rfloor-1\rfloor$.
    Then $C$ can move from $z$ to $w$, which is possible since $(w,z)\in E(G)$,
    and the cop is then in state (1).
    This completes the proof of the claim.
    \hfill $\diamond$
    \vspace{0.5cm}

    Thus, in summary, we have constructed an induced path $P^i$ satisfying all the assumptions of the induction hypothesis.
    Moreover, we have positioned one cop $C$ on $P^i$ so that $N[P^i]$ is guarded indefinitely.
    Therefore, we have satisfied the induction hypothesis.
    This allows us to assume that after finite time
    for all $1\leq j \leq \Big\lceil \frac{k-2}{\lfloor \frac{t}{4}\rfloor} \Big\rceil$, there is a distinct induced path $P^j$ of order $\lfloor\frac{t}{4} \rfloor$,
    such that $N[P^j]$ is guarded by $1$ cop who has no other responsibilities.
    All paths $P^j$ are vertex disjoint.
    Also, the union of all vertices $\cup_{j=1}^{i-1} V(P^j)$ induces a path, call it $Q$,
    while there is an endpoint $x$ of $Q$ which has a neighbour in the component of $G-N[Q\setminus \{x\}]$ containing the robber.
    From this state we proceed to Phase~2.

    \vspace{0.25cm}
    \noindent\underline{Phase~2:} 
    \vspace{0.25cm}

    Label our cops $C^i$ for all $1\leq i \leq \lceil \frac{k-2}{\lfloor\frac{t}{4}\rfloor} \rceil$.
    In Phase~2 we argue that,
    given an induced path $Q = q_1\dots q_M$
    such that 
    \begin{enumerate}[(i)]
        \item $M\geq \lfloor\frac{t}{4}\rfloor \lceil \frac{k-2}{\lfloor\frac{t}{4}\rfloor} \rceil$, and 
        \item for all $1\leq i \leq \lceil \frac{k-2}{\lfloor\frac{t}{4}\rfloor} \rceil$,
        the cop $C^i$ guards the closed neighbourhood of
        the subpath $Q_i = q_{(i-1)\lfloor\frac{t}{4}\rfloor+1}\dots q_{i\lfloor\frac{t}{4}\rfloor}$, and
        \item $q_1$ has a neighbour in the same connected component of $G-N[Q\setminus \{q_1\}]$
        as the robber,
    \end{enumerate}
    our set of cops can either catch the robber, or redeploy 
    to a new induced path $Q' = q'_1\dots q'_{M+1}$ so that 
    \begin{enumerate}[(I)]
        \item for all $j\geq 1$, $q_j = q'_{j+1}$, and 
        \item for all $1\leq i \leq \lceil \frac{k-2}{\lfloor\frac{t}{4}\rfloor} \rceil$,
        the cop $C^i$ guards the closed neighbourhood of
        the subpath $Q'_i = q'_{(i-1)\lfloor\frac{t}{4}\rfloor+1}\dots q'_{i\lfloor\frac{t}{4}\rfloor}$, and
        \item $q'_1$ has a neighbour in the same connected component of $G-N[Q'\setminus \{q'_1\}]$
        as the robber, and 
        \item the robber never entered $N[Q'\setminus\{q'_1\}]$
        during the time it took the cops to redeploy from $Q$ to $Q'$.
    \end{enumerate}
    Of course the conclusion of Phase~1 leaves us in a position 
    that satisfies conditions (i)-(iii).
    Meanwhile condition (I)-(IV) of the redeployment means the resulting path also satisfies (i)-(iii).
    Hence, we can recursively reapply this claim, until either the robber
    is captured or the entire graph is part of the closed neighbourhood of induced path we are constructing.
    Since the graph $G$ is finite, the latter implies the former since condition (IV) forces that 
    the robber would be caught if its location is part of the closed neighbourhood of $Q'$.

    Suppose then there exists a path $Q$ satisfying (i)-(iii).
    Let $\mathcal{R}$ be the connected component of  $G-N[Q]$  containing the robber.
    As before, if $\mathcal{R}$ is empty, then the robber is captured.
    Suppose then it is not empty.
    By (iii) and the definition of $\mathcal{R}$ there exists a vertex $x \in N(q_1)\cap N(\mathcal{R})$
    such that $N(x)\cap V(Q) = \{q_1\}$.
    Let $x$ be such a vertex.
    Let $q'_1 = x$ and for all $j\geq 1$ let $q_j = q'_{j+1}$, let  $Q' = q'_1\dots q'_{M+1}$.
    So (I) and (III) are satisfied.
    Since $N(x)\cap V(Q) = \{q_1\}$ we note that $Q'$ is an induced path, as desired.
    Begin the game state from this position.

    We begin by noting that the robber cannot enter 
    $
    N[\{q_{k-2},\dots, q_M\}]\setminus N[\{q_{1},\dots, q_{k-3}\}]
    $
    while the cops have been guarding $N[\{q_{1},\dots, q_{k-3}\}]$ since the beginning of the game state.
    Suppose for contradiction that the robber where to enter $N[\{q_{k-2},\dots, q_M\}]\setminus N[\{q_{1},\dots, q_{k-3}\}]$.
    Then there exists a vertex $r$ in $\mathcal{R}$ and a vertex $z \in N(\{q_{k-2},\dots, q_M\})\setminus N(\{q_{1},\dots, q_{k-3}\})$ such that $(r,z)\in E(G)$.
    Without loss of generality, let $p$ be the smallest integer such that $z \in N(q_p)$.
    Then $p\geq k-2$.
    Notice that $r\notin N(Q)$, and by (iii) there is a $(q_1,r)$-path of order at least $3$ all of whose
    internal vertices are in $\mathcal{R}$ or in $N(q_1)\setminus N[Q\setminus \{q_1\}]$.
    Call $P$ such a path.
    Then, $\mathcal{C} = Pzq_pq_{p-1}\dots q_{2}$ is a cycle,
    and the only possible chords in $\mathcal{C}$ are incident to the internal vertices of $P$.
    Since $p\geq k-2$ this implies that there is a hole of length at least $k$ in $\mathcal{C}$.
    This contradicts that $G$ is $(C_k,C_{k+1},\dots)$-free.
    Thus, we conclude that the robber cannot enter $N[\{q_{k-2},\dots, q_M\}]\setminus N[\{q_{1},\dots, q_{k-3}\}\}]$
    as desired.

    For each $i$ let $Q'_i$ be defined as in (II).
    We note that the guarding of paths $Q_i$ leaves exactly $1$ cop free.
    Call this cop $C^0$.
    The cop's strategy begins by having $C^0$ walk to $q'_1$.
    They will stand there for the remainder of the redeployment, preventing the robber entering $N[q'_1]$
    from this point onward.
    
    Next, we claim that for $i\geq 1$, if cop $C^{i-1}$ is guarding $N[Q_{i-1}]$
    the cop $C^i$, which begins guarding $N[Q_i]$, can move to guard $N[Q'_i]$
    so that $N[V(Q_i)\cap V(Q'_i)]$ is guarded in all intermediate steps.
    For $i=1$ this requires $C^0$ standing on $q'_1$ as we have outlined.
    We will the apply this strategy to all cops $C^i$, for $i\geq 1$, in decreasing order, one at a time.
    This allows,
    each cop $C^i$, for $i\geq 1$, to switch from guarding $Q_i$ to $Q'_i$ while ensuring that guarding $N[\{q_{1},\dots, q_{k-3}\}]$ is guarded at all times since the beginning of the game state.
    Because of that assumption, we can suppose the robber never enters the $N[\{q_{k-2},\dots, q_M\}]\setminus N[\{q_{1},\dots, q_{k-3}\}]$
    by the previous paragraph's argument.
    Implicit here is the easy
    observation that for all $j$,
    \[
    V(Q_j)\cap V(Q'_j) = V( Q_j)\setminus \{q_{j\lfloor\frac{t}{4}\rfloor}\} =  V( Q'_j)\setminus \{q'_{(j-1)\lfloor\frac{t}{4}\rfloor+1}\}.
    \]
    Thus, these claims will imply (II) and (IV).
    
    All guarding up to this point is done using the strategies in Lemma~\ref{Lemma: 1-guard},
    which involves the cop being in one of the two prescribed states (1) or (2).
    We can suppose then without loss of generality then that $C^i$ is in state (1) or state (2) for $N[Q_i]$,
    as long as we then show $C^i$ will guard $N[Q'_i]$ by achieving state (1) or state (2) from Lemma~\ref{Lemma: 1-guard}
    for $N[Q'_i]$.

    Let $H = G - N[Q']$.
    Since the robber begins in $\mathcal{R}$ 
    and we will suppose the robber has never entered $N(Q)$ between the current state and the beginning of the game state
    the robber is restricted to moving in $H$ until the current state.
    Also, once $C^0$ reaches $q'_1$ the robber will never
    enter $N[q'_1]$ for the rest of the redeployment.
    We then prove the robber cannot leave $H$ in the current state without being caught.
    Observe that the cops' strategy only become easier if distances outside 
    the guarded set increase.
    Hence, cops in state (1) or (2) for a large subgraph, 
    can trivially transfer to guarding in state (1) or state (2)
    for a subgraph with larger distances for the robber.
        Thus, when measuring distance for the purpose of Lemma~\ref{Lemma: 1-guard}
    we can always do so in $H$, and we can always assume our cops guarding $N[Q_i]$
    are in state (1) or (2) for guarding $N[Q_i]$ using distances in $H$.

    \vspace{0.25cm}
    \noindent\underline{Case~1:} The cop $C^i$ begins in state (1) for guarding $N[Q_i]$.
    \vspace{0.25cm}
    
    To begin, suppose that $C^i$ is 
    in state (1) for the purposes of guarding $N[Q_i]$.
    Let $u$ be the vertex occupied by $C^i$.
    Without loss of generality let $q_j \in N(u) \cap V(Q_i)$.
    Since $C^i$ is in state (1) it is the robbers turn. 
    Suppose the robber move to a vertex
    $R$.

    Consider first the case where $\dist_H(R,N(Q'_i))\leq \lfloor \frac{t}{2}- \lfloor \frac{t}{4}\rfloor-1\rfloor$.
    Since $C^i$ was in state (1), $R \in N(Q_i)$ implies the cop $C^i$ catches the robber.
    Together with our assumption that $C^{i-1}$ is guarding $N[q_{(i-1)\lfloor\frac{t}{4}\rfloor}]$
    implies that so suppose $R\notin N(Q'_i)$.
    Let $w \in N(Q'_i)$ such that $\dist_H(R,w)\leq \lfloor \frac{t}{2}- \lfloor \frac{t}{4}\rfloor-1\rfloor$.
    If $(u,w)\in E(G)$, then $C^i$ moves from $u$ to $w$ and $C^i$ is now in state (1) for guarding $N[Q'_i]$.
    Suppose then that $(u,w)\not\in E(G)$.
    Then, since $u,w$ are non-adjacent neighbours of $Q_i\cup Q'_i$, an induced path of order $\lfloor \frac{t}{4} \rfloor+1$.
    By Lemma~\ref{Lemma: Common neighbourhood} implies that 
    \begin{align*}
        t - \Big(\lfloor \frac{t}{4} \rfloor+1\Big) & = \lceil \frac{3t}{4} \rceil - 1\\
        & \leq \dist_{G-N[Q_i\cup Q'_{i}]}(u,w) \\
        &\leq \dist_H(u,w) \\
        &\leq \dist_H(u,R) + \dist_H(R,w) \\
        & \leq \dist_H(u,r)+1 + \dist_H(R,w) \\
        & \leq 2 \Big(\frac{t}{2}-\lfloor \frac{t}{4}\rfloor-1 \Big) + 1 \\
        & = t -2\lfloor\frac{t}{4}\rfloor -1
    \end{align*}
    a contradiction for $t\geq 9$.

    Now consider the case where $\dist_H(R,N(Q'_i))> \lfloor \frac{t}{2}- \lfloor\frac{t}{4}\rfloor-1\rfloor$.
    If  $\dist_H(R,N(Q'_i))\geq (\lfloor \frac{t}{2}- \lfloor\frac{t}{4}\rfloor-1\rfloor)+2$,
    then $C^i$ moves to $q_j$.
    On their next turn, if $q_j \in V(Q'_i)$, then $C^i$ stays still.
    Otherwise, $q_j = q_{i\lfloor \frac{t}{4} \rfloor}$, and in this case, or $C^i$
    moves from $q_{i\lfloor \frac{t}{4} \rfloor}$ to $q_{i\lfloor \frac{t}{4} \rfloor-1}$.
    Since $\dist_H(R,N(Q'_i))\geq (\lfloor \frac{t}{2}- \lfloor\frac{t}{4}\rfloor-1\rfloor)+2$,
    regardless of how the robber responds, say they end on a vertex $W$ following the reply to $C^i$ moving to $q_j\in V(Q'_i)$,
    we observe that
    \[
    \dist_H(W,N(Q'_i))\geq \dist_H(R,N(Q'_i))-2 \geq \lfloor \frac{t}{2}- \lfloor\frac{t}{4}\rfloor-1\rfloor
    \]
    while $C^i$ occupies a vertex of $Q'_i$.
    Thus, $C^i$ is in state (2) for the purpose of guarding $N[Q'_i]$ since it is the cops turn, 
    following the robber's move to $W$.
    Suppose then that $\dist_H(R,N(Q'_i))= \lfloor \frac{t}{2}-\lfloor \frac{t}{4}\rfloor \rfloor$.

    Since $\dist_H(R,N(Q'_i))= \lfloor \frac{t}{2}-\lfloor \frac{t}{4}\rfloor \rfloor$ there exists 
    a vertex $w \in N(Q'_i)$ such that $\dist_H(R,w) = \lfloor \frac{t}{2}-\lfloor \frac{t}{4}\rfloor \rfloor$.
    If $(u,w)\notin E(G)$, then, as above, 
    $u,w$ are non-adjacent neighbours of $Q_i\cup Q'_i$, an induced path of order $\lfloor \frac{t}{4} \rfloor+1$, from which
    Lemma~\ref{Lemma: Common neighbourhood} implies that 
    \begin{align*}
        t - \Big(\lfloor \frac{t}{4} \rfloor+1\Big) & = \lceil \frac{3t}{4}\rceil -1\\
    \end{align*}
    \begin{align*}
        & \leq \dist_{G-N[Q_i\cup Q'_{i}]}(u,w) \\
        &\leq \dist_H(u,w) \\
        &\leq \dist_H(u,R) + \dist_H(R,w) \\
        & \leq \dist_H(u,r)+1 + \dist_H(R,w) \\
        & \leq  2 \Big( \frac{t}{2}-\lfloor \frac{t}{4}\rfloor-1\Big) + 2\\
        & = t - 2\lfloor\frac{t}{4}\rfloor.
    \end{align*}
    a contradiction for $t\geq 13$. So $(u,w)\in E(G)$.
    Thus, the cop $C^i$ moves from $u$ to $w$ in response to the robber's move from $r$ to $R$.
    
    Suppose the robber $R$ next moves to a vertex $X$ with $\dist_H(X,N(Q'_i))=  \lfloor \frac{t}{2}- \lfloor\frac{t}{4}\rfloor-1\rfloor$.
    If $\dist_H(X,w) =  \lfloor \frac{t}{2}- \lfloor\frac{t}{4}\rfloor-1\rfloor$, then the cop passes and on the robber's next turn $C^i$ is
    in state (1) for guarding $N[Q'_i]$.
    Otherwise, there exists a vertex $x\neq w$ such that $x \in N(Q'_i)$ and  $\dist_H(X,x) =  \lfloor \frac{t}{2}- \lfloor\frac{t}{4}\rfloor-1\rfloor$.
    If $(w,x)\in E(G)$ the cop $C^i$ moves to $x$ and it is now in state (1) for guarding $N[Q'_i]$.
    So consider $(w,x)\not\in E(G)$.
    Then,
    $x,w$ are non-adjacent neighbours of $Q'_i$, an induced path of order $\lfloor \frac{t}{4} \rfloor$, from which
    Lemma~\ref{Lemma: Common neighbourhood} implies that 
    \begin{align*}
        t - \lfloor \frac{t}{4} \rfloor & = \lceil\frac{3t}{4}\rceil\\
        &\leq \dist_{G-N[Q'_{i}]}(x,w) \\
        &\leq \dist_H(x,w) \\
        &\leq \dist_H(x,X) + \dist_H(X,w) \\
        & \leq \dist_H(x,X)+ \dist_H(R,w) +1\\
        & \leq  2 \Big( \frac{t}{2}-\lfloor \frac{t}{4}\rfloor-1 \Big) + 2 \\
        & = t - 2\lfloor\frac{t}{4}\rfloor.
    \end{align*}
    a contradiction for $t\geq 9$.

    Suppose then the robber moves to a vertex $Y$ with $\dist_H(Y,N(Q'_i))\geq \lfloor \frac{t}{2}-\lfloor \frac{t}{4}\rfloor \rfloor $.
    Then the cop $C^i$ moves from $w$ to any neighbour of $w$ in $Q'_i$.
    No matter how the robber responds, the cop $C^i$ will be in state (2) at the start of their next turn.

    Thus, no matter how the robber moves, the cop $C^i$ beginning in state (1) for guarding $N[Q-i]$
    can redeploy to state (1) or state (2) for guarding $N[Q'_i]$ without the robber entering $N[Q'_i]$.
    \hfill $\diamond$
    \vspace{0.5cm}

    \vspace{0.25cm}
    \noindent\underline{Case~2:} The cop $C^i$ begins in state (2) for guarding $N[Q_i]$.
    \vspace{0.25cm}
    
    Next, suppose that $C^i$ is in state (2) for the purposes of guarding $N[Q_i]$.
    Let $q_N$ be the vertex occupied by $C^i$.
    Then $N\leq i\lfloor\frac{t}{4}\rfloor$.
    Note $q_N \in V(Q_i)$, we suppose without loss of generality $q_N = q_{i\lfloor\frac{t}{4}\rfloor}$.
    If this is not true and $C$ begins in state (2) for the purpose of guarding $N[Q'_i]$, then
    we are done.
    Otherwise, the robber begins within distance of $\lfloor \frac{t}{2}- \lfloor\frac{t}{4}\rfloor-1\rfloor$
    of $N(q_{(i-1)\lfloor\frac{t}{4}\rfloor})$ 
        but outside distance $\lfloor \frac{t}{2}- \lfloor\frac{t}{4}\rfloor-1\rfloor$
        of any other vertex in $Q'_i$
        and same argument can be applied. 
    As the game progresses let $R_d$ be the location of the robber after $d$ moves have been made by the robber 
    since the 
    start of the case. 
    So the robber starts at $R_0$.

    Since $C^i$ begins in state (2) it is the cops turn to start.
    For all $d\geq 0$
    let $S_d \subseteq N[Q'_i]$ such that
    $s \in S_d$ if and only if $\dist_H(R_d,s)\leq \lfloor \frac{t}{2}- \lfloor\frac{t}{4}\rfloor-1\rfloor$.

    The cop $C^i$ proceeds via the following strategy.
    Beginning at $q_{i\lfloor\frac{t}{4}\rfloor}$ the cop $C^i$ walks along 
    $Q'_i$ to $q_{(i-1)\lfloor\frac{t}{4}\rfloor}$, not paying attention to the robber,
    unless the cop is interrupted by one of the following situations, or the cop $C^i$ 
    reaches state (1) or state (2) for the purpose of guarding $N[Q'_i]$.
    Each of these interruptions will lead to $C^i$ being 
    able to reach state (1) or state (2) for the purpose of guarding $N[Q'_i]$,
    so we can suppose that whenever an interruption occurs, it is the first interruption.
    Hence, after $d$ robber moves, the cop $C^i$ will be occupying vertex $q_{i\lfloor\frac{t}{4} \rfloor-d}$.

    The first interruption for the cop $C^i$ is if $S_d = \emptyset$.
    If $S_d = \emptyset$, then the cop moves to a vertex of $Q'_i$, which is always possible
    since $C^i$ is either on a vertex of $Q'_i$ or $d=0$.
    Since $S_d = \emptyset$ we observe that $\dist_H(R_{d+1},N(Q'_i))\geq \lfloor \frac{t}{2}- \lfloor\frac{t}{4}\rfloor-1\rfloor$.
    Thus, on cop's next turn, $C^i$ is in state (2) for the purposes of guarding $N[Q'_i]$.

    The second interruption for the cop $C^i$ is if
    $q_{i\lfloor\frac{t}{4} \rfloor-d} \in N(S_d)$.
    In this case $C^i$ moves to a vertex in $S_d$
    and the cop $C^i$ is in state (1) for the purposes of guarding $N[Q'_i]$.

    We claim that there must exist an integer $0 \leq d\leq \lfloor \frac{t}{4} \rfloor$ such that 
    $C^i$ is interrupted while standing on vertex $q_{i\lfloor\frac{t}{4}\rfloor-d}$.
    Suppose not for the sake of contradiction.

    Since the first interruption never occurs it must be true that $S_d\neq \emptyset$
    for all $0 \leq d\leq \lfloor \frac{t}{4} \rfloor$.
    Next, for each $x \in N(Q'_i)$ we let $f(x) = p - (i-1)\lfloor \frac{t}{4}\rfloor$ where $q_p$ is the vertex of largest index in
    $V(Q'_i)\cap N(x)$.
    We claim that for all $d$, if $x \in S_d$, then $f(x)\leq \lfloor \frac{t}{4}\rfloor -d$.
    
    Suppose not for the sake of contradiction and suppose that $d$ is the smallest integers such that 
    there exists a vertex $x \in S_d$ with $f(x)> \lfloor \frac{t}{4}\rfloor -d$.
    Let $x$ be such a vertex.
    Notice that by the definition of $f$, $f(x) < \lfloor \frac{t}{4}\rfloor$ for all $x$,
    hence $d\geq 1$.

    We claim that $x \notin S_{d-1}$.
    Suppose so for contradiction.
    Then $x \in S_{d-1}$ and by the minimality of $d$,
    $f(x) \leq \lfloor \frac{t}{4}\rfloor -d+1$.
    Hence, $f(x) = \lfloor \frac{t}{4}\rfloor -d+1$
    implying that $x \in N(q_{i\lfloor \frac{t}{4}\rfloor -d+1})$.
    But before making their move $d$, the cop $C^i$ occupies 
    $q_{i\lfloor \frac{t}{4}\rfloor -d+1}$,
    implying the cop $C^i$ is adjacent to $x \in S_{d-1}$.
    Thus, 
    cop $C^i$ is interrupted while standing on vertex 
    $q_{i\lfloor \frac{t}{4}\rfloor -d+1}$ a contradiction.
    We conclude that $x \notin S_{d-1}$.

    Since $x\notin S_{d-1}$ and $S_{d-1}\neq \emptyset$
    there exists a vertex $y\neq x$ such that $y \in S_{d-1}$.
    By the minimality of $d$, $f(y) \leq \lfloor \frac{t}{4}\rfloor -d+1$ .
    Notice that if $f(y) = \lfloor \frac{t}{4}\rfloor -d+1$, then this would interrupt 
    the cop $C^i$, since the cop's strategy would force a move to $y$ rather than $q_{ \lfloor \frac{t}{4}\rfloor -d}$.
    Since this does not occur by assumption $f(y) \leq \lfloor \frac{t}{4}\rfloor -d$.
    Since 
    $y \in S_{d-1}$ and $x \in S_{d}\setminus S_{d-1}$
    we note that 
    \begin{align*}
        \dist_H(R_{d-1},y)\leq \lfloor \frac{t}{2}- \lfloor\frac{t}{4}\rfloor-1\rfloor & &\text{and }& &
        \dist_H(R_{d},x)= \lfloor \frac{t}{2}- \lfloor\frac{t}{4}\rfloor-1\rfloor.
    \end{align*}
    If $(x,y)\notin E(G)$, then Lemma~\ref{Lemma: Common neighbourhood}
    implies 
    \begin{align*}
        t - \lfloor \frac{t}{4} \rfloor & = \lceil \frac{3t}{4} \rceil\\
        & \leq \dist_{G-N[Q'_{i}]}(x,y) \\
        &\leq \dist_H(x,y) \\
        &\leq \dist_H(x,R_{d}) + \dist_H(R_d,y) \\
        & \leq \dist_H(x,R_{d}) + \dist_H(R_{d-1},y) + 1 \\
    \end{align*}
    \begin{align*}
        & \leq 2 \Big( \frac{t}{2}-\lfloor \frac{t}{4}\rfloor-1 \Big) + 1 \\
        &= t-2\lfloor\frac{t}{4}\rfloor -1
    \end{align*}
    a contradiction for $t\geq 5$.
    Otherwise, $(x,y)\in E(G)$.
    Recalling that since $G$ is $(C_4,\dots, C_t)$-free $N(x) \cap V(Q'_i)$ and $N(y) \cap V(Q'_i)$
    form subintervals of $Q'_i$. 
    Observe that these intervals must intersect,
    otherwise, there is a hole of order at most $|V(Q'_i)|+2 \leq t$
    formed by a subinterval of $Q'_i$ and $xy$.
    From this it follows that $\{q_{f(y)+(i-1)\lfloor\frac{t}{4}\rfloor},q_{f(y)+1+(i-1)\lfloor\frac{t}{4}\rfloor}\dots, q_{f(x)+(i-1)\lfloor\frac{t}{4}\rfloor}\} \subseteq N(x)$.
    But $C^i$ occupies vertex $q_{i\lfloor\frac{t}{4}\rfloor - d}$, the robber occupies vertex $R_d$,
    and $f(y)+(i-1)\lfloor\frac{t}{4}\rfloor \leq i\lfloor\frac{t}{4}\rfloor - d < f(x)+(i-1)\lfloor\frac{t}{4}\rfloor$.
    Therefore, $C^i$ is adjacent to $x$, implying the cop's strategy is interrupted by a move to $x\in S_d$, a contradiction.
    We conclude that for all $d$, and for all $x \in S_d$, $f(x) \leq \lfloor\frac{t}{4}\rfloor - d$.

    Since, for all $d$ and $s \in S_d$ we have $f(s) \leq \lfloor\frac{t}{4}\rfloor - d$,
    if $S_{\lfloor\frac{t}{4}\rfloor} \neq \emptyset$, which we assumed, as otherwise the cop $C^i$ is interrupted,
    then there exists a vertex $s \in S_{\lfloor\frac{t}{4}\rfloor}$ such that $s \in N[q_{(i-1)\lfloor\frac{t}{4}\rfloor}]$
    while the cop $C^i$ occupies vertex $q_{(i-1)\lfloor\frac{t}{4}\rfloor}$ and it is the cop's turn.
    At this point the cop $C^i$ moves to such a vertex $s$ and as a result $C^i$ is in state (1)
    for the purpose of guarding $N[Q'_i]$.

    Finally, we claim that the robber never enters $N[Q'_i]$ during 
    the time it took the cop $C^i$ to employ this strategy and move to a position guarding $N[Q'_i]$.
    Recall that $C^{i-1}$ is guarding $N[Q_{i-1}]$
    at all prior steps of the argument, and recall that $q_{(i-1)\lfloor\frac{t}{4}\rfloor} \in V(Q_{i-1})$.
    Thus, the robber never enters $N[q_{(i-1)\lfloor\frac{t}{4}\rfloor}]$.
    Next, 
    we note that for all $s \in N(Q_i)$, 
    since the cop $C^i$ begins in state (2) for the purposes of guarding $N[Q_i]$
    \[
    \dist_H(s,R_0)\geq \lfloor \frac{t}{2}- \lfloor\frac{t}{4}\rfloor-1\rfloor.
    \]
    Hence, it would take the robber at least $\lfloor \frac{t}{2}- \lfloor\frac{t}{4}\rfloor-1\rfloor$ turns to reach any $s \in N(Q_i\cap Q'_i)$.
    Since 
    we have shown that if the cop has not begun guarding $N[Q'_i]$,
    then for all $s \in S_d$, $f(s) \leq \lfloor\frac{t}{4}\rfloor -d$,
    and our $s$ must have some vertex of $Q_i\cap Q'_i$, implying $f(s)\geq 1$,
    we note the robber only has $\lfloor \frac{t}{4}\rfloor -2$ turns to reach a vertex of $N(Q_i\cap Q'_i)$.
    Otherwise, the cop will be guarding $N[Q'_i]$,
    or the only vertices within distance $\lfloor \frac{t}{2}- \lfloor\frac{t}{4}\rfloor-1\rfloor$
    of the robber will be neighbours of $q_{(i-1)\lfloor\frac{t}{4}\rfloor}$.
    Since,
    \[
    \lfloor \frac{t}{4}\rfloor-2 <\lfloor \frac{t}{2}- \lfloor\frac{t}{4}\rfloor-1\rfloor
    \]
    the robber cannot reach a neighbour of $N(Q_i\cap Q'_i)$ before $N[Q'_i]$ is guarded.
    We conclude the desired claim that robber never enters $N[Q'_i]$ during 
    the time it took the cop $C^i$ to employ this strategy and move to a position guarding $N[Q'_i]$.
\hfill $\diamond$
\vspace{0.5cm}

This completes the proof that in Phase.2 the cops can redeploy from a path satisfying (i)-(iii)
while satisfying (I)-(IV).
As a result of the already provided arguments in Phase.2 this completes the proof that 
$\lceil\frac{k-2}{\lfloor\frac{t}{4}\rfloor}\rceil+1$
cops are sufficient to capture a robber in $G$.
\end{proof}

\section{Graphs with two Consecutive Forbidden Holes}

In this section we turn our attention from graphs with no short holes
to graphs that have two consecutive forbidden holes.
Such graph classes are necessarily much larger.
In exchange for this increased generality we achieve weaker bounds when $t$ is large.
It is unclear if the upper bounds we prove are tight for any $P_k$-free graphs, even those with no restriction on holes,
for $t\geq 2$.
Compared to the results in Section~2, these arguments are simpler.

\begin{lemma}\label{Lemma: Guarding with conscutive forbidden holes}
    Let $t\geq 2$ be an integer
    and $G$ a $(C_{t+2},C_{t+3})$-free
    graph.
    If $P = v_1\dots v_t$ is an induced path in $G$,
    for all $i \in \{1,\dots, t-1\}$ there is a cop $C^i$ on $v_i$,
    the robber is occupying a vertex in $V(G) \setminus N[P\setminus \{v_t\}]$,
    and it is the cop's turn,
    then $C^1,\dots, C^{t-1}$ are in a position to guard $N[P]$ in all subsequent turns.
\end{lemma}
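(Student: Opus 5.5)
The plan is to let the $t-1$ cops act as a sliding window on $P$. At each cops' turn they occupy either $W_1=\{v_1,\dots,v_{t-1}\}$ or $W_2=\{v_2,\dots,v_t\}$. Switching between the two windows takes a single simultaneous move: every cop steps one vertex along $P$. Call a vertex $y\in N(P)$ \emph{private to $v_1$} if $N(y)\cap V(P)=\{v_1\}$, and \emph{private to $v_t$} if $N(y)\cap V(P)=\{v_t\}$. Every other vertex of $N[P]$ is adjacent to, or equal to, some vertex of $W_1\cap W_2=\{v_2,\dots,v_{t-1}\}$ (for $t=2$ this set is empty and every vertex of $N(P)$ is private to $v_1$, private to $v_2$, or adjacent to both). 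Such vertices are therefore dominated by a cop whichever window is occupied. The only danger is a robber stepping onto a vertex private to the end of $P$ that the current window leaves uncovered.

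The key structural step is the following claim. \emph{If $r\notin N[P]$, then $r$ cannot have both a neighbour $y$ private to $v_t$ and a neighbour $z$ private to $v_1$. Likewise, no vertex private to $v_t$ is adjacent to a vertex private to $v_1$.}
\begin{itemize}
\item For the second statement, suppose $y\sim z$ with $y$ private to $v_t$ and $z$ private to $v_1$. Then $y v_t v_{t-1}\dots v_1 z$ is an induced cycle of length $t+2$, because $P$ is induced and $y,z$ see only their respective endpoints.
\item For the first statement, we may now assume $y\not\sim z$. Then $r y v_t \dots v_1 z$ is an induced cycle of length $t+3$, since $r$ has no neighbour on $P$.
\end{itemize}
Both cycles are excluded because $G$ is $(C_{t+2},C_{t+3})$-free. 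One should double-check the case $t=2$, where $P=v_1v_2$, but the same cycles (of lengths $4$ and $5$) work.

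The strategy then maintains the following invariant: after each cops' move, every vertex of $N(P)$ adjacent to the robber's position is dominated by a cop. On the cops' turn, look at the robber's current vertex $r$.
\begin{itemize}
\item If $r$ has a neighbour private to $v_t$, the cops move to (or stay on) $W_2$.
\item If $r$ has a neighbour private to $v_1$, the cops move to $W_1$.
\item Otherwise the cops stay put.
\end{itemize}
By the claim, the first two cases never occur together. Whatever window the cops choose, every neighbour of $r$ in $N(P)$ is then adjacent to a cop. So if the robber moves into $N[P]$, it is captured on the next cop move. Entering $V(P)$ itself requires first passing through $N(P)$, so this case is covered too.

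The main obstacle is the initial position. The hypothesis only places the robber outside $N[P\setminus\{v_t\}]$, so it may already sit on a vertex $y$ private to $v_t$. In that case the cops shift to $W_2$, putting a cop on $v_t$ adjacent to the robber, and the robber must move.
\begin{itemize}
\item It cannot move to a vertex private to $v_1$, by the second statement of the claim.
\item Any other vertex of $N[P]$ it could reach is adjacent to a cop in $W_2$, so moving there leads to capture.
\item Hence it must move to some $r'\notin N[P]$, at which point the invariant is in force and the argument above applies.
\end{itemize}
I would write this first step out carefully. I would also verify that the window switch never needs to happen "late", i.e.\ that the robber always has at least one turn adjacent to a private vertex before it can enter it. This holds because a robber outside $N[P]$ reaches $N(P)$ only by first occupying a vertex adjacent to it, and the cops react at that moment.
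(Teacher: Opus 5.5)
Your proof is correct and matches the paper's argument essentially step for step. Your classification of the robber's vertex by which endpoint's private neighbours it sees is the paper's function $f$, and your $C_{t+2}$/$C_{t+3}$ claim is the paper's check that $f$ is well defined. Your treatment of a robber starting on a vertex private to $v_t$ (shift right, then use the $C_{t+2}$ obtained from an edge between vertices private to $v_1$ and $v_t$) is exactly the paper's final step.

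One harmless slip: for $t\geq 3$ a vertex of $N(P)$ may see exactly $\{v_1,v_t\}$, which only creates an allowed $C_{t+1}$. Such a vertex is not adjacent to $\{v_2,\dots,v_{t-1}\}$ as you assert, but it is still dominated in either window, so your conclusion stands.
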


\begin{proof}
    Let $t\geq 2$ and let $G$ be a $(C_{t+2},C_{t+3})$-free
    graph.
    Define $P$ as in the statement and assume that for all $i \in \{1,\dots, t-1\}$ there is a cop $C^i$ on $v_i$.
    Suppose the robber is occupying a vertex in $V(G) \setminus N[P\setminus \{v_t\}]$.

    For all vertices $x \in V(G)\setminus N[P]$
    let $f(x)=-1$ if $(N(x)\cap N(v_1))\setminus N(P\setminus \{v_1\}) \neq \emptyset$,
    let $f(x)=1$ if $(N(x)\cap N(v_t))\setminus N(P\setminus \{v_t\}) \neq \emptyset$,
    and let $f(x)=0$ otherwise.
    That is, $f(x)=-1$ if $x$ has a common neighbour with $v_1$
    that is private with with respect to $P$.
    With $f(x)=1$ defined analogously with $v_t$ replacing $v_1$.
    Meanwhile, $f(x)=0$ captures all remaining cases.
    
    We claim $f$ is well defined.
    Given the definition of $f$ it is sufficient to prove that 
    there exists no path
    $v_1uxwv_t$ where $x \notin N[P]$ and $u \notin N(P\setminus\{v_1\})$ and $w \notin N(P\setminus\{v_t\})$.
    Suppose not for contradiction.

    Then there exists a vertex $x \in V(G)\setminus N[P]$ adjacent to vertices
    $u \in N(v_1)$ and $w \in N(v_t)$
    such that $u \notin N[P\setminus \{v_1\}]$ and $w \notin  N[P\setminus \{v_t\}]$.
    Observe that $u$ and $w$ must be distinct.
    Also observe that $Pwxu$ is a cycle of length $t+3$
    whose only possible chordal is the edge $(u,w)$.
    If $(u,w)\in E(G)$, then $Pwu$ is a hole of length $t+2$, a contradiction.
    If $(u,w)\notin E(G)$, then $Pwxu$ is a hole of length $t+3$, also a contradiction.
    We conclude that $f$ is well defined.

    Now the cops $C^1,\dots, C^{t-1}$ guard $N[P]$
    as follows. Recall it is the cop's turn to move.
    Let $r\in V(G)\setminus N[P]$ be the current location of the robber.
    If $f(r)=0$ or $f(r)=-1$, then the cops do nothing.
    The robber cannot enter $N[P]$ on their next turn without being captured since, $f(r)=0$ or $f(r)=-1$
    implies that no vertex $N(r)\cap N(P)$ is a private neighbour of $v_t$.
    Since all other vertices of $P$ contain a cop, the robber cannot enter $N[P]$.
    Otherwise $f(r)=1$, implying no vertex $N(r)\cap N(P)$ is a private neighbour of  $v_1$.
    In this case, all cops $C^i$ slide one vertex to the right in $P$,
    leaving a cop $C^i$ on vertex $v_{i+1}$ for all $i$.
    The robber cannot move into $N(P)$ without moving next to a cop and being caught.

    After this move the cops are either in the same position or a symmetric position 
    to where they started and can repeat this strategy indefinitely in the obvious way.
    Thus, the cops can guard $N[P]$ from this starting positions as claimed.
    This completes the proof that if the robber begins outside $N[P]$, then 
    the cops can guard $N[P]$ for all subsequent turns.

    Suppose then that the robber occupies vertex $r \in N(v_t)\setminus N[P\setminus \{v_t\}]$.
    Then all cops slide to the right, as if $f(r)=1$.
    This forces the robber to move to a new vertex $R$.
    If $R\in N[P\setminus \{v_1\}]$, then there is a cop adjacent to $R$ and the robber is captured.
    If $R \in V(G) \setminus N[P]$, then from this position the cops can prevent the robber from ever 
    entering $N[P]$ again via the prior strategy.
    Otherwise, $R \in N(v_1)\setminus N[P\setminus \{v_1\}]$.
    Since the robber moved from $r$ to $R$ this implies $(r,R)\in E(G)$,
    hence $PrR$ is a hole of length $t+2$ a contradiction.
    This completes the proof.
\end{proof}

We are now prepared to prove Theorem~\ref{Thm: general n56}.
This proof proceeds almost identically to the proof of Theorem~\ref{Thm: No short hole},
so we shorten it to 
prevent redundancy.
As a result, we encourage the reader to understand the proof of 
Theorem~\ref{Thm: No short hole} prior to reading the following, abridged, proof.

\begin{proof}[Proof of Theorem~\ref{Thm: general n56}]
    Let $t\geq 3$ be an integer.
    and let $G$ be a fixed but arbitrary $(C_{t+2}, C_{t+3}, C_k, C_{k+1,\dots})$-free graph.
    We claim that 
    \[
    (t-1)\Big\lceil \frac{(k-2)}{t} \Big\rceil.
    \]
    cops have a winning strategy in $G$.

    The proof follows exactly the Phase.1 and Phase.2 strategy from the proof of Theorem~\ref{Thm: No short hole},
    but all intermediate steps are much easier,
    as we use $t-1$ cops to generate and guard each path $P_i$ and $Q_i$ rather than $1$ cop.
    Because of this, at all but the last step of each argument there is a cop on every 
    vertex of the respective induce path being constructed, so the robber can never move adjacent to the paths
    in question.
    Once the path is constructed Lemma~\ref{Lemma: Guarding with conscutive forbidden holes} implies the cops can guard 
    the path.
    The reader can easily verify that this strategy is sufficient.
    This completes the proof since $t-1$ cops are used to guard each induced path of order $t$.
\end{proof}

\section{Future Work}

To conclude the paper it feels valuable 
to add some conjectures.
Given the oddness of the current time these conjectures may be quickly solved.
However, they still seem to still be worth stating,
because they seem challenging, and their solutions may prove informative.
Also, they will hopefully serve the purpose of 
making others interested in this area.

\begin{conjecture}\label{Conj: no short no long}
    There exists an integer $N$ such that 
    for all $k\geq t\geq 4$,
    every 
    $(C_4,\dots, C_{t}, C_k, C_{k+1,\dots,})$-free
    graph $G$ has cop number
    \[
    c(G)  \leq   \frac{2k}{t} + N.
    \]
\end{conjecture}

It is well established, see for instance \cite{bonato2014note,joret2010cops},
that cop number is upper bounded by treewidth.
More strongly, one can easily observe that the
cop number is upper bounded by a function of the maximum domination number of a bag in a tree-decomposition,
minimized over all tree-decompositions.
To see this, suppose the min-maxed domination number here is $k$.
Use $k$ cops to dominate a bag, then move $k$ other cops to dominate the next bag closest to the robber.
By repeatedly doing this, with the two sets of $k$ cops switching roles, the cops will win.

Inspired by a question I was asked recently during research travel 
of the form `what is the nicest thing you could imagine being true of cop number and induced subgraphs?' it is worth proposing the following
`meta conjecture'.
Notice it might be true that 
hereditary class is too strong a notion here.
If that is the case, then it would be interesting
to understand if there is a natural, less general,
notion of graph class where the conjecture is true.

\begin{conjecture}\label{Conj: Meta-conj}
    Let $\mathcal{H}$ be a hereditary class of graphs.
    If
    for all $t$ the subset of $\mathcal{H}$ that is $(K_t,K_{t,t})$-free
    has bounded treewidth,
    then $\mathcal{H}$ has bounded cop number.
\end{conjecture}

The following question is stated to motivate
further interest in the problem, and because if correct it would be optimal up to additive constants.
Should such graphs exist their structure would be informative,
and if they do not exist, then the proof of such a result would likely be interesting.
Also note that if such graphs exist, they would serve as counterexamples to Conjecture~\ref{Conj: no short no long}.

\begin{question}\label{Conj: no C_4,C_5}
    Does there exists an integer $N$ such that for all $k$,
    there exists a $(C_4,C_5,C_k,C_{k+1},\dots)$-free graph with
    cop number at least $\frac{k}{2}-N$?
\end{question}

To conclude it seems worth formally stating a conjecture
that was gestured at in \cite{clow2027cops}.
As with many things in this paper, the conjecture is made about graphs with no long hole rather than $P_k$-free
graphs.
As has been mentioned a number of times, if true, then graphs in \cite{clow2026cops}
demonstrates this is tight up to the choice of $N$.

\begin{conjecture}\label{Conj: construction is correct}
    There exists an integer $N$ such that for all $k$,
    if $G$ is a $(C_k,C_{k+1},\dots)$-free graph, then 
    $c(G) \leq \frac{k}{2}+N$.
\end{conjecture}

\section*{AI Declaration}

I declare that no mathematical idea appearing in the paper was contributed by AI,
and that the 
text of this paper is written entirely by the author, 
with the exception of checking spelling and grammar, and double checking proofs for writing errors such as indexing mistakes.
This mathematical content of the paper, although not the writing, was completed prior to the middle of August 2026.
AI was used to survey the literature, along with standard search tools, when deciding
which special case of Conjecture~\ref{Conj: main} to work on.
Before making this paper public 
the author had
six agents of GPT-6 Astra, coordinated by a seventh,
attempt each of the conjectures and question proposed
for 1 hour.
This did not result in any of the conjectures or question being resolved.

\bibliographystyle{abbrv}
\bibliography{bib}

\begin{thebibliography}{10}

\bibitem{AIGNER1984}
M.~Aigner and M.~Fromme.
\newblock A game of \uppercase{C}ops and \uppercase{R}obbers.
\newblock {\em Discrete Applied Mathematics}, 8(1):1--12, 1984.

\bibitem{andreae1986pursuit}
T.~Andreae.
\newblock On a pursuit game played on graphs for which a minor is excluded.
\newblock {\em Journal of Combinatorial Theory, Series B}, 41(1):37--47, 1986.

\bibitem{bonato2013cops}
A.~Bonato and A.~Burgess.
\newblock Cops and robbers on graphs based on designs.
\newblock {\em Journal of Combinatorial Designs}, 21(9):404--418, 2013.

\bibitem{bonato2014note}
A.~Bonato, N.~E. Clarke, S.~Finbow, S.~Fitzpatrick, and M.-E. Messinger.
\newblock A note on bounds for the cop number using tree decompositions.
\newblock {\em Contributions to Discrete Mathematics}, 9(2), 2014.

\bibitem{bonato2011game}
A.~Bonato and R.~Nowakowski.
\newblock {\em The game of \uppercase{C}ops and \uppercase{R}obbers on graphs}.
\newblock American Mathematical Soc., 2011.

\bibitem{bose2026cops}
P.~Bose, L.~Esperet, J.~Hodor, G.~Joret, P.~Micek, and C.~Rambaud.
\newblock Cops and robber in graphs with bounded vertex cover number.
\newblock {\em arXiv preprint arXiv:2602.07435}, 2026.

\bibitem{bradshaw2023cop}
P.~Bradshaw, S.~A. Hosseini, B.~Mohar, and L.~Stacho.
\newblock On the cop number of graphs of high girth.
\newblock {\em Journal of Graph Theory}, 102(1):15--34, 2023.

\bibitem{chudnovsky2024cops}
M.~Chudnovsky, S.~Norin, P.~D. Seymour, and J.~Turcotte.
\newblock \uppercase{C}ops and \uppercase{R}obbers on $\uppercase{P}_5$-free graphs.
\newblock {\em SIAM Journal on Discrete Mathematics}, 38(1):845--856, 2024.

\bibitem{clow2025cops}
A.~Clow, M.~A. Huggan, and M.-E. Messinger.
\newblock Cops and attacking robbers with cycle constraints.
\newblock {\em Discrete Applied Mathematics}, 373:327--342, 2025.

\bibitem{clow2027cops}
A.~Clow and E.~Meger.
\newblock Cops and robbers on graphs with path constraints.
\newblock {\em Discrete Mathematics}, 350(1):115392, 2027.

\bibitem{clow2026cops}
A.~Clow and I.~Zaguia.
\newblock Cops and robbers, clique covers, and induced cycles.
\newblock {\em Discrete Mathematics}, 349(11):115252, 2026.

\bibitem{de2024guarding}
S.~G.~H. de~la Maza and B.~Mohar.
\newblock Guarding isometric subgraphs and cops and robber in planar graphs.
\newblock {\em European Journal of Combinatorics}, 119:103809, 2024.

\bibitem{frankl1987cops}
P.~Frankl.
\newblock Cops and robbers in graphs with large girth and cayley graphs.
\newblock {\em Discrete Applied Mathematics}, 17(3):301--305, 1987.

\bibitem{gupta2023cops}
U.~K. Gupta, S.~Mishra, and D.~Pradhan.
\newblock Cops and robber on subclasses of p5-free graphs.
\newblock {\em Discrete Mathematics}, 346(6):113353, 2023.

\bibitem{PkNoClawFork2026}
U.~K. Gupta, D.~Pradhan, and R.~R. Swain.
\newblock Cop number of $p_k$-free graphs without a claw or a fork.
\newblock {\em arXiv preprint arXiv:2610.20129}, 2026.

\bibitem{javadi2021game}
R.~Javadi and A.~Momeni.
\newblock The game of cops and robber on (claw, even-hole)-free graphs.
\newblock {\em arXiv preprint arXiv:2112.07503}, 2021.

\bibitem{joret2010cops}
G.~Joret, M.~Kami{\'n}ski, and D.~O. Theis.
\newblock The \uppercase{C}ops and \uppercase{R}obber game on graphs with forbidden (induced) subgraphs.
\newblock {\em Contributions to Discrete Mathematics}, 5(2), 2010.

\bibitem{kenter2025improved}
F.~Kenter, E.~Meger, and J.~Turcotte.
\newblock Improved bounds on the cop number when forbidding a minor.
\newblock {\em Journal of Graph Theory}, 108(3):620--646, 2025.

\bibitem{kosowski2015k}
A.~Kosowski, B.~Li, N.~Nisse, and K.~Suchan.
\newblock k-chordal graphs: From cops and robber to compact routing via treewidth.
\newblock {\em Algorithmica}, 72(3):758--777, 2015.

\bibitem{lehner20262}
F.~Lehner and A.~Sun.
\newblock The 2-cops-move cop number of graphs on the torus, klein bottle, and projective plane.
\newblock {\em arXiv preprint arXiv:2607.21821}, 2026.

\bibitem{liu2019cop}
M.~Liu.
\newblock The cop number of graphs with forbidden induced subgraphs.
\newblock {\em arXiv preprint arXiv:1908.11478}, 2019.

\bibitem{luczak2010chasing}
T.~{\L}uczak and P.~Pra{\l}at.
\newblock Chasing robbers on random graphs: zigzag theorem.
\newblock {\em Random Structures \& Algorithms}, 37(4):516--524, 2010.

\bibitem{nowakowski1983vertex}
R.~Nowakowski and P.~Winkler.
\newblock Vertex-to-vertex pursuit in a graph.
\newblock {\em Discrete Mathematics}, 43(2-3):235--239, 1983.

\bibitem{pralat2010does}
P.~Pralat.
\newblock When does a random graph have constant cop number?
\newblock {\em Australasian Journal of Combinatorics}, 46:285--296, 2010.

\bibitem{pralat2016meyniel}
P.~Pra{\l}at and N.~Wormald.
\newblock Meyniel's conjecture holds for random graphs.
\newblock {\em Random Structures \& Algorithms}, 48(2):396--421, 2016.

\bibitem{quilliot1983problemes}
A.~Quilliot.
\newblock Problemes de jeux, de point fixe, de connectivit{\'e} et de repr{\'e}sentation sur des graphes, des ensembles ordonn{\'e}s et des hypergraphes.
\newblock {\em These d’Etat, Universit{\'e} de Paris VI}, pages 131--145, 1983.

\bibitem{sivaraman2019application}
V.~Sivaraman.
\newblock An application of the \uppercase{G}y{\'a}rf{\'a}s path argument.
\newblock {\em Discrete Mathematics}, 342(8):2306--2307, 2019.

\bibitem{sivaraman2019cop}
V.~Sivaraman.
\newblock Cop number of graphs without long holes.
\newblock {\em arXiv preprint arXiv:2001.00477}, 2019.

\bibitem{sivaraman2019copK2}
V.~Sivaraman and S.~Testa.
\newblock Cop number of $2\uppercase{K}_2$-free graphs.
\newblock {\em arXiv preprint arXiv:1903.11484}, 2019.

\bibitem{turcotte2022cops}
J.~Turcotte.
\newblock \uppercase{C}ops and \uppercase{R}obbers on 2$\uppercase{K}_2$-free graphs.
\newblock {\em Discrete Mathematics}, 345(1):112660, 2022.

\bibitem{west2001introduction}
D.~B. West.
\newblock {\em Introduction to Graph Theory}, volume~2.
\newblock Prentice hall Upper Saddle River, 2001.

\end{thebibliography}

\end{document}